\documentclass[12pt,reqno]{amsart}
\advance \topmargin by -\headheight
\advance \topmargin by -\headsep
\evensidemargin \oddsidemargin
\usepackage{url}
\usepackage{amsmath}
\usepackage{bbm}
\usepackage{amsfonts}
\usepackage{stmaryrd}
\usepackage{amssymb}
\usepackage{amsthm}
\usepackage{csquotes}
\usepackage{color}
\usepackage{mathrsfs}
\usepackage{dsfont}
\usepackage{bm}
\usepackage{cite}
\usepackage{soul}
\usepackage{bbm}

\numberwithin{equation}{section}

\newtheorem{theorem}{Theorem}[section]
\newtheorem{corollary}{Corollary}[theorem]
\newtheorem{lemma}[theorem]{Lemma}
\newtheorem{proposition}[theorem]{Proposition}

\newtheorem{question}[theorem]{Question}
\theoremstyle{definition}
\newtheorem{definition}[theorem]{Definition}

\def\RR{\mathbb R}
\def\HH{\mathcal H}

\def\eps{\varepsilon}
\def\VC{\mathrm{VC}}

\newcommand{\ip}[2]{\left\langle #1,#2\right\rangle}
\newcommand{\norm}[1]{\left\|#1\right\|}
\newcommand{\U}{\mathsf U}

\usepackage{mathtools}

\title{A note on quantitative stability in Hilbert spaces}

\author{Yifan Jing}
\address{Department of Mathematics, The Ohio State University, Columbus OH, 43210, USA}
\email{jing.245@osu.edu}

\subjclass[2020]{03C45, 46C05, 05D99} 
\begin{document}

\begin{abstract}
We study stability theory in Hilbert spaces quantitatively. We prove that the inner product on the unit ball is $(k,\eps)$-stable for all $k\ge \exp(\pi/\eps)$, and it is not $(k,\eps)$-stable for $k\le \exp(\log 2/\eps)$, showing that the growth is necessarily exponential in $1/\eps$.

We then analyze how stability scales under nonlinear connectives applied to the inner product. In particular, for power-type predicates $f(x,y)=\langle x,y\rangle_+^\beta$ with $\beta<1$ we obtain upper and lower bounds of the form $\exp(C\eps^{-1/\beta})$, and for $\beta>1$ and integer powers $\langle x,y\rangle^d$ we retain the bilinear scale $\exp(C/\eps)$. 
\end{abstract}

\maketitle

\section{Introduction}

The stability of the Hilbert inner product has a long history before its appearance in continuous logic. On the functional analytic side, Grothendieck~\cite{Grothendieck} introduced a ``double limit'' paradigm in which commutation of iterated limits serves as a compactness criterion. The connection between the compactness criterion and the stability theory in model theory was made explicit by Ben Yaacov \cite{BenStab}, although related ideas appear to have been used implicitly much earlier. For related discussions see also~\cite{Pillay_Groth,Conant_Stab}.
 In a different direction, Krivine and Maurey~\cite{KrivineMaurey} formulated a notion of stable Banach space by requiring suitable double limits of norms along type-determining sequences to commute. 
These classical tools yield qualitative stability statements for the inner product in Hilbert spaces: for each $\eps>0$ there exists some $k(\eps)$ such that no $(k(\eps),\eps)$-half-graph appears. On the quantitative side, $\eps$ double-limit defect implies the set is $\delta$-weakly relatively compact, with explicit dependence between $\eps$ and $\delta$ \cite{FHMZ}, and it was further developed and extended by many authors, see e.g. \cite{Granero, QuantitativeBanach} and the references therein.

In this note we work with the following quantitative half-graph notion.

\begin{definition}[$(k,\eps)$-half-graphs and stability]
\label{def:stability}
Let $f:X\times Y\to\RR$.  We say that $f$ has a \emph{$(k,\eps)$-half-graph} if there exist
$x_1,\dots,x_k\in X$ and $y_1,\dots,y_k\in Y$ such that for all $i<j$,
\begin{equation}\label{eq:intro-halfgraph}
f(x_i,y_j)-f(x_j,y_i) \ge \eps.
\end{equation}
If no such configuration exists, we say that $f$ is \emph{$(k,\eps)$-stable}.
\end{definition}

In the literature, $(k,\varepsilon)$-stability is sometimes defined using an absolute value on the left-hand side of \eqref{def:stability}; see, for instance, \cite{CCP24}. Qualitatively, the two definitions are equivalent, and quantitatively they differ by at most an exponential factor. We believe that the definition adopted here directly encodes the edge/non-edge structure of half graphs. The same definition also appears in~\cite{Talagrand87, Talagrand96}.

In continuous logic, the stability of a binary formula can be formulated in terms of the absence of half-graphs as in Definition~\ref{def:stability}. The modern framework for continuous first order logic was developed by Ben Yaacov, Berenstein, Henson, and Usvyatsov~\cite{Continuous_Logic}, and further developed by many authors. In this setting the theory of real or complex Hilbert spaces is stable, and definable predicates admit robust geometric descriptions; in particular, the inner product on the unit ball is stable. What is typically not tracked is the effective dependence of $k$ on the error tolerance $\eps$, even for the basic predicate $\langle x,y\rangle$.

On the application side, the stability theory of Hilbert space  plays a role in the model theory of stable groups and approximate groups by Hrushovski~\cite{Hrushovski}, and it is a key ingredient in the simplified proof of Tao's algebraic regularity lemma~\cite{Tao} due to Pillay and Starchenko~\cite{Pillay_Starchenko}, see also \cite{TaoBlog}.

We now state our first result. Let $\HH$ be a Hilbert space and let $\U=\{x\in \HH:\|x\|\le 1\}$ be its unit ball.
Our first main theorem gives an explicit and essentially optimal quantitative stability bound.

\begin{theorem}[Quantitative stability of the Hilbert inner product]
\label{thm:inner product}
Let $\HH$ be a Hilbert space, $\eps\in(0,1)$, and $\U\subseteq \HH$ its unit ball.
Then the inner product $\langle\cdot,\cdot\rangle:\U\times\U\to[-1,1]$ is $(k,\eps)$-stable for
\[
k\ge \exp\Big(\frac{\pi}{\eps}\Big).
\]
Moreover, for $m=  1/\eps $ there exist vectors $x_1,\dots,x_{2^m},y_1,\dots,y_{2^m}\in\U$ such that
$\langle x_i,y_j\rangle-\langle x_j,y_i\rangle\ge 1/m\ge \eps$ for all $i<j$.
In particular, $\langle\cdot,\cdot\rangle$ is not $(k,\eps)$-stable for
\[
k \le\exp\Big(\frac{\log 2}{\eps}\Big).
\]
\end{theorem}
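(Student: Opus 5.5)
The plan has two parts: a Hilbert-matrix averaging argument for the upper bound, and an explicit binary-tree construction for the lower bound.

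\textbf{Upper bound.} Suppose $x_1,\dots,x_k,y_1,\dots,y_k\in\U$ form a $(k,\eps)$-half-graph, and set $A_{ij}=\langle x_i,y_j\rangle$. The idea is to test the skew part of $A$ against the Hilbert matrix. Let $T$ be the $k\times k$ skew-symmetric matrix with entries
\[
T_{ij}=\frac{1}{j-i}\quad (i\neq j), \qquad T_{ii}=0.
\]
Consider
\[
S=\sum_{i\neq j}T_{ij}A_{ij}=\sum_{i<j}\frac{A_{ij}-A_{ji}}{j-i}.
\]

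From below, each numerator is at least $\eps$. Hence
\[
S\ge \eps\sum_{d=1}^{k-1}\frac{k-d}{d}=\eps\,(kH_{k-1}-(k-1)),
\]
which is $\eps k(\log k-O(1))$.

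From above, write $S=\sum_i\langle x_i,z_i\rangle$ with $z_i=\sum_j T_{ij}y_j$. Cauchy--Schwarz gives $S\le (\sum_i\|x_i\|^2)^{1/2}(\sum_i\|z_i\|^2)^{1/2}$. Hilbert's inequality says the operator norm of $T$ on $\ell_2^k$ is at most $\pi$. The same bound holds for $T\otimes \mathrm{Id}_{\HH}$ on $\ell_2^k(\HH)$: apply it coordinatewise in an orthonormal basis of $\mathrm{span}\{y_j\}$. So $\sum_i\|z_i\|^2\le \pi^2\sum_j\|y_j\|^2\le \pi^2 k$, and therefore $S\le \pi k$.

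Comparing the two bounds gives $\eps\,(H_{k-1}-1+1/k)\le \pi$, so $\log k\lesssim \pi/\eps$. This contradicts $k\ge \exp(\pi/\eps)$ once the constants are handled.

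\textbf{Main obstacle.} The hardest step is getting exactly the constant $\pi$ with no additive loss, since $H_{k-1}-1$ is smaller than $\log k$ by about $0.42$. My first attempt would be to exploit that the finite Hilbert matrix has norm strictly below $\pi$, with a known deficit of order $\pi^2/\log k$. Alternatively, I would replace the weights $1/(j-i)$ by $1/(j-i+c)$ or by $\cot$-type kernels $\frac{\pi}{k}\cot\frac{\pi(j-i)}{k}$. These are the Fourier multipliers of $-i\,\mathrm{sgn}$ on $\mathbb Z_k$, so they have norm exactly $1$, and their row sums grow like $\frac{2}{\pi}\log k+O(1)$ with a favorable constant. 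I would then check that the resulting inequality rules out $k\ge e^{\pi/\eps}$.

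\textbf{Lower bound.} Assume $m=1/\eps$ is an integer, and index the points by $i\in\{0,\dots,2^m-1\}$ with binary digits $b_1(i),\dots,b_m(i)$, most significant digit first. Work in a Hilbert space with orthonormal basis $e_w$ indexed by binary words $w$ of length at most $m$. Define
\[
x_i=\frac1{\sqrt m}\sum_{s:\,b_s(i)=0}e_{b_1(i)\cdots b_{s-1}(i)1},\qquad
y_j=\frac1{\sqrt m}\sum_{s:\,b_s(j)=1}e_{b_1(j)\cdots b_s(j)}.
\]
Each vector is a sum of at most $m$ distinct basis vectors scaled by $1/\sqrt m$, so it lies in $\U$.

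A common basis vector for $x_i$ and $y_j$ requires the same length $s$, agreement of $i$ and $j$ in the first $s-1$ digits, $b_s(i)=0$, and $b_s(j)=1$. This happens exactly when $s$ is the first digit where $i$ and $j$ differ and $i<j$. Hence $\langle x_i,y_j\rangle=\frac1m\mathbf 1[i<j]$. It follows that $\langle x_i,y_j\rangle-\langle x_j,y_i\rangle=1/m=\eps$ for $i<j$, giving a half-graph of size $2^m=\exp(\log 2/\eps)$.
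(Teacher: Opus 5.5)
Your lower bound is correct. It is the paper's binary-tree construction in mirror image. Your upper bound is exactly the paper's argument: the skew matrix $1/(j-i)$, Hilbert's inequality, the coordinatewise vector-valued extension, and Cauchy--Schwarz. The difficulty you flag, however, is a genuine gap. What the argument proves is $\eps\,(H_{k-1}-1+1/k)\le\pi$, i.e.\ $\log k\le \pi/\eps+1-\gamma+o(1)$ with $\gamma$ Euler's constant. That leaves half-graphs of size between $\exp(\pi/\eps)$ and roughly $e^{0.42}\exp(\pi/\eps)$ unexcluded. The paper's write-up does not close this either. It derives $S\ge\eps(n\log n-n)$ and then uses $\eps n\log n\le S$, so as written it only gives $n\le\exp(\pi/\eps+1)$.

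Neither repair you suggest works as stated.

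\emph{The norm deficit.} Put $A_k=(1/(i-j))_{i\ne j\le k}$. You would need $\|A_k\|_{\mathrm{op}}<\pi-\pi(1-\gamma+o(1))/\log k$, but these finite sections converge to $\pi$ polynomially fast. Indeed, $-iA_k$ is the Toeplitz matrix of the symbol $a(\phi)=\pi-\phi$ on $(0,2\pi)$. Take $u_x=w(x/k)e^{-i\theta x}$ with $w$ a smooth bump on $(0,1)$ and $\theta=k^{-1+\delta}$. Then
\[
\langle -iA_ku,u\rangle=\frac{1}{2\pi}\int_0^{2\pi}a(\phi)\Big|\sum_{x} u_xe^{ix\phi}\Big|^2\,d\phi=(\pi-O(\theta))\|u\|^2 ,
\]
because the fraction of mass leaking across the jump at $\phi=0$ is $O_N((k\theta)^{-N})$ for every $N$.

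\emph{The cotangent kernel.} The kernel $\frac{\pi}{k}\cot\frac{\pi(j-i)}{k}$ is the wrong object. It is the periodization $\sum_m \frac{1}{d+mk}$ of the Hilbert kernel, and its multiplier on $\mathbb Z_k$ is the sawtooth $-i\pi(1-2\xi/k)$, not $-i\,\mathrm{sgn}$. So its norm is $\pi(1-2/k)$, not $1$. Moreover, its entries are negative for $j-i>k/2$, which breaks the lower bound $S\ge\eps\sum_{i<j}T_{ij}$.

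\emph{What does work.} The \emph{negacyclic} version of your second idea succeeds, and it is exactly dual to the construction in Appendix~B. Let $J_k$ be the skew matrix with $+1$ above the diagonal. Its eigenvectors are $v_r$ with eigenvalues $i\cot(\theta_r/2)$, where $\theta_r=(2r-1)\pi/k$. Let $U$ be its polar factor: the real skew-symmetric negacyclic matrix acting by $i\,\mathrm{sgn}(\sin\theta_r)$ on $v_r$. Then $\|U\|_{\mathrm{op}}\le 1$, and for $i<j$, with $d=j-i$,
\[
U_{ij}=\frac{2}{k}\sum_{r:\,\theta_r<\pi}\sin(d\theta_r)=\frac{2\sin^2(\lfloor k/2\rfloor d\pi/k)}{k\sin(d\pi/k)}\ \ge 0 .
\]
Your Cauchy--Schwarz step gives $\sum_{i\ne j}U_{ij}\langle x_i,y_j\rangle\le k$. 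The half-graph condition gives
\[
\sum_{i\ne j}U_{ij}\langle x_i,y_j\rangle\ge \eps\sum_{i<j}U_{ij}=\frac{\eps}{2}\operatorname{tr}(U^{\mathsf T}J_k)=\frac{\eps}{2}\|J_k\|_{S_1}=\frac{\eps k c_k}{2}.
\]
Hence $\eps\le 2/c_k$, and this is attained by the Appendix~B construction. Finally, $\cot x\ge \frac1x-\frac{4x}{\pi^2}$ on $(0,\pi/2]$, by convexity of $\frac1x-\cot x$. This gives $c_k\ge \frac{2}{\pi}(\log k+\log 2+\gamma-\tfrac12)-O(1/k)>\frac{2}{\pi}\log k$, so $\eps<\pi/\log k$, i.e.\ $k<\exp(\pi/\eps)$, which is the stated threshold.
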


This answers a question asked by Conant at the \emph{Combinatorics Meets Model Theory} workshop in Cambridge; see also~\cite{CCP24} for related work\footnote{The question asked in the paper is for a slightly  different notion of stability, namely $|f(x_i,y_j)-f(x_j,y_i)|>\eps$. Our lower  bound construction still gives $\exp(C/\eps)$ while the upper bound will be $\exp\exp(C/\eps)$}. Our results also raise a natural sharp-constant problem in the spirit of classical functional-analytic extremal inequalities\footnote{The question was recently answered by the author with the help of ChatGPT. This result is included in the appendix. The sharp bound is $\exp(\pi/\eps)$. }. 
\begin{question}[See Appendix B]\label{question}
   Determine the optimal constant $C_i$, where $C_i$ is the infimum over all $C>0$ with the property that, for every Hilbert space $\HH$ and every $\eps\in(0,1)$, the inner product on the unit ball is $(k,\eps)$-stable for all $k>\exp(C/\eps)$. 
\end{question}

A second theme of this paper concerns how quantitative stability behaves under nonlinear operations on the inner product, and, more generally, how the regularity of a one-variable connective influences the size of half-graphs. From a purely analytic viewpoint, it is natural to expect that passing from $\langle x,y\rangle$ to a nonlinear expression such as $(\langle x,y\rangle)^2$, or to predicates involving norms (e.g. $\|x-y\|$), might increase the effective ``combinatorial complexity'' of half-graphs and lead to bounds of the form $\exp(C/\eps^2)$ or worse. What happens is subtler.
For instance, using the tensor-power trick, integer-power connectives $t\mapsto t^d$ can be treated on the same qualitative scale as Theorem~\ref{thm:inner product}. On the other hand, for power-type connectives with $\beta\in(0,1]$, the correct scale is $\exp(C\eps^{-1/\beta})$. More generally, for monotone connectives with a quantitative H\"older modulus on the relevant range, one obtains bounds of the form $\exp(C/\eps^\alpha)$, where the exponent $\alpha$ reflects the effective ``flatness'' of the connective at scale $\eps$.
We develop this systematically in the following theorem.

\begin{theorem}[Quantitative stability for power connectives]
\label{thm:holder conn}
There exist absolute constants $C,c>0$ such that the following hold.
\begin{enumerate}
\item Fix $\alpha\in[1,\infty)$ and define
\[
f_\alpha(x,y) := \big(\langle x,y\rangle_+\big)^\alpha,
\qquad \text{where } t_+:=\max\{t,0\}.
\]
Then $f_\alpha$ on $\U\times\U$ is $(k,\eps)$-stable for all $k\ge\exp(C_\alpha/\eps)$, and it is not $(k,\eps)$-stable for
$k\le \exp(c_\alpha/\eps)$, where $c_\alpha,C_\alpha>0$ depend only on $\alpha$.

\item Fix $\beta\in(0,1]$ and define
\[
f_\beta(x,y) := \big(\langle x,y\rangle_+\big)^\beta.
\]
Then $f_\beta$ on $\U\times\U$ is $(k,\eps)$-stable for all $k\ge\exp(C/\eps^{1/\beta})$, and it is not $(k,\eps)$-stable for
$k\le \exp(c/\eps^{1/\beta})$.

\item Fix an integer $d\ge 1$ and define
\[
f_d(x,y) := \langle x,y\rangle^d.
\]
Then $f_d$ on $\U\times\U$ is $(k,\eps)$-stable for all $k\ge\exp(C_d/\eps)$, and it is not $(k,\eps)$-stable for
$k\le \exp(c_d/\eps)$, where $c_d,C_d>0$ depend only on $d$.
\end{enumerate}
\end{theorem}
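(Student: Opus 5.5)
We handle the three parts with one upper-bound mechanism and one lower-bound mechanism. Both come from Theorem~\ref{thm:inner product}, used either as a black box on a lifted Hilbert space or through its construction.

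\emph{Upper bounds.} For (3), use the tensor-power trick. The map $x\mapsto x^{\otimes d}$ sends $\U$ into the unit ball of $\HH^{\otimes d}$, and $\langle x^{\otimes d},y^{\otimes d}\rangle=\langle x,y\rangle^d$. So a $(k,\eps)$-half-graph for $f_d$ on $\U$ is a $(k,\eps)$-half-graph for the inner product on the unit ball of $\HH^{\otimes d}$. Theorem~\ref{thm:inner product} then gives $k<\exp(\pi/\eps)$ directly, so $C_d=\pi$ works.

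For (1) with $\alpha\ge1$, first write $g(t)=(t_+)^\alpha$ on $[-1,1]$ as a uniform approximation by a polynomial with nonnegative coefficients up to error $\eps/4$. We can instead approximate by $\sum_j a_j t^j$ with $\sum|a_j|$ bounded in terms of $\alpha$ and $\log(1/\eps)$. A cleaner route is to split $g=g_+ - g_-$ into kernels of the form $\sum_j a_j\langle x,y\rangle^j$ with $a_j\ge0$. Then $\sum_j a_j\langle x,y\rangle^j=\langle\Phi(x),\Phi(y)\rangle$ with $\Phi(x)=\bigoplus_j\sqrt{a_j}\,x^{\otimes j}$ and $\|\Phi(x)\|^2\le\sum a_j$. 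Rescaling and applying Theorem~\ref{thm:inner product} to each of the two kernels, a half-graph gap $\eps$ for $g$ forces a gap of at least about $\eps/(2A)$ for one kernel on a large sub-half-graph. Here $A=\sum a_j$. The two-coloring of pairs is handled by Ramsey, or more simply by a pigeonhole that splits the gap.

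The main obstacle is keeping $A$ bounded by a constant depending only on $\alpha$, with no dependence on $\eps$. For $\alpha\ge1$ I would instead apply the Hölder/Lipschitz structure directly. Since $g$ is $\alpha$-Lipschitz, a gap $\eps$ for $f_\alpha$ gives $\langle x_i,y_j\rangle-\langle x_j,y_i\rangle\ge\eps/\alpha$ whenever the two inner products have the same ordering. This is automatic because $g$ is monotone, so $g(a)-g(b)\ge\eps$ implies $a-b\ge\eps/\alpha$. Theorem~\ref{thm:inner product} then gives $k<\exp(\pi\alpha/\eps)$, and this monotone argument removes the obstacle entirely.

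For (2), monotonicity gives $a-b\ge$ the smallest $s$ with $g(b+s)-g(b)\ge\eps$. For $t\mapsto t_+^\beta$ the worst case is $b\le0$, where only $s\ge\eps^{1/\beta}$ is guaranteed. Applying Theorem~\ref{thm:inner product} with $\eps^{1/\beta}$ gives $k<\exp(\pi/\eps^{1/\beta})$.

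\emph{Lower bounds.} For (2), take the construction of Theorem~\ref{thm:inner product} with $m=\lceil\eps^{-1/\beta}\rceil$. This gives $2^m$ pairs with inner-product gap $1/m$. I would inspect (or redesign) the construction so that $\langle x_j,y_i\rangle\le0$ and $\langle x_i,y_j\rangle\ge 1/m$ for $i<j$. For example, shift by a common coordinate: append $\pm$ a coordinate of size about $1/\sqrt{2m}$ and renormalize, costing a constant factor. Then $f_\beta$ has gap $\ge (c/m)^\beta\ge\eps$, giving $2^{c\eps^{-1/\beta}}$ points.

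For (1) and (3), use the same construction with $m\asymp 1/\eps$, but shift the values near $1$, where $t^\alpha$ and $t^d$ have derivative bounded below. Concretely, take $x_i'=(\sqrt{1-\delta}\,e_0,\sqrt\delta\,x_i)$ and similarly $y_i'$, with $\delta$ a small constant. The inner products become $1-\delta+\delta\langle x_i,y_j\rangle\in[1-2\delta,1]$. On this interval the connective has derivative at least about $\alpha/2$ (respectively $d/2$), so the gap $\delta/m$ becomes at least about $\alpha\delta/(2m)$. Choosing $m\asymp c_\alpha/\eps$ yields $\exp(c_\alpha/\eps)$ points. For odd or even $d$ nothing changes, since all values are positive.
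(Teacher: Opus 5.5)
Your proposal is correct and follows the paper's route: monotonicity plus the $\alpha$-Lipschitz or $\beta$-H\"older modulus (and the tensor-power trick for $\langle x,y\rangle^d$) reduces every upper bound to Theorem~\ref{thm:upper}. The lower bounds come from the tree construction of Proposition~\ref{prop:key}, used as is for $f_\beta$ (its forward values are exactly $1/m$ and its backward values exactly $0$, so your extra $\pm$ shift is unnecessary), and shifted by a common orthogonal component for $\alpha>1$ and $t^d$, where the paper takes $\theta=\pi/4$, i.e.\ $\delta=1/2$, rather than a small $\delta$; the polynomial-approximation/Ramsey detour in your part (1) is not needed and can be dropped.
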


\subsection*{Organization}
 Sections~\ref{sec:lower} and \ref{sec:upper} prove Theorem~\ref{thm:inner product}: we give an explicit half-graph construction for the inner product and a matching exponential upper bound via Grothendieck's inequality type argument. Section~\ref{sec:powers} studies nonlinear connectives of the inner product, including power-type predicates, and establishes sharp quantitative bounds of the form $\exp(C/\eps^\alpha)$. 
In the appendix, we discuss related notions of dimension, including continuous analogues of VC-dimension. We determine the exact dimensions for the inner product in this setting.

\subsection*{Notation} we fix notation and recall the continuous logic used in the later part of the paper. Throughout $\eps\in(0,1)$ is a parameter. Let $\HH$ be a real or complex Hilbert space with inner product $\langle\cdot,\cdot\rangle$ and the $L^2$-norm $\|x\|=\langle x,x\rangle^{\frac12}$.
We write
\[
\U:=\{x\in \HH:\|x\|\le 1\}
\]
for the unit ball.  We denote by $B(\HH)$ the algebra of all bounded linear operators $T:\HH\to \HH$, equipped with the operator norm
\[
\|T\|_{\mathrm{op}}=\sup_{\|x\|\le 1}\|Tx\|.
\]
In the complex case all predicates are understood to be real-valued, thus the inner product is $\Re\langle x,y\rangle$. The upper bound arguments below apply equally well to complex Hilbert spaces after passing to the underlying real Hilbert space. The lower bound  constructions are written in real Hilbert spaces, and therefore also give lower bounds in complex Hilbert spaces by embedding the real examples as real subspaces.

We work in the standard setting of continuous first-order logic, but we will only need concrete languages on the unit ball $\U$.
Let $X$ be a set,  A predicate on $X$ is a bounded real-valued function $
\varphi:X\to \RR.
$
Let $\mathcal L_{\mathrm{Hilb}}$ be the single-sorted language on $\U$ containing:
\begin{enumerate}
\item a constant symbol $0$;
\item binary function symbols $f_{\lambda,\mu}(x,y)=\lambda x+\mu y$ for each $\lambda,\mu\in\mathbb Q$ with $|\lambda|+|\mu|\le 1$;
\item the metric predicate $d(x,y)=\|x-y\|$;
\item the inner product predicate $\langle x,y\rangle$ (or $\Re\langle x,y\rangle$ in the complex case).
\end{enumerate}
All symbols are uniformly continuous on $\U$.

In the proof, for the sake of a cleaner presentation, we will ignore floor and ceiling functions when integer parameters are required. This introduces at most an $O(1)$ error.

\section*{Statement of AI}

The first version of the paper was done by the author, and ChatGPT was used for polishing the language. In the new version, Appendix B is added and the new construction in the appendix was found by ChatGPT. Appendix B is written by the author and he is responsible for the mathematical content. 

 \section*{Acknowledgements}
 The author is grateful to Gabriel Conant for the wonderful lectures he gave at the \emph{Combinatorics Meets Model Theory} workshop in Cambridge, and to the other organizers, Caroline Terry and Julia Wolf. He also thanks Jinhe Ye for bringing \cite{BenStab} to his attention, Artem Chernikov for valuable discussion regarding stability and the VC-dimension, 
  and Chavdar Lalov for introducing him to basic concepts from learning theory.
  The author is supported by NSF Grant DMS-2503063.

\section{A half-graph construction in a Hilbert space}\label{sec:lower}

Let $\HH$ be a Hilbert space and let $\U:=\{x\in \HH:\norm{x}\le 1\}$ be its unit ball. Recall that a function $f:X\times Y\to\mathbb R$ is $(k,\eps)$-stable if there do not exist $x_1,\dots,x_k\in X$ and $y_1,\dots,y_k\in Y$ such that for every $i<j$,
\[
f(x_i,y_j)-f(x_j,y_i)\ge \eps.
\]
In this section we show that the exponential upper bound in $1/\eps$ is sharp (up to constants in the exponent) already for the inner product on $\U$.

Fix an integer $m\ge 1$ and let $\Sigma_m:=\{0,1\}^m$ be the set of binary strings of length $m$. Write $s=s_1\cdots s_m\in\Sigma_m$, and for $0\le r\le m$ denote by $s_{\le r}:=s_1\cdots s_r$ the length-$r$ prefix, with $s_{\le 0}=\varnothing$. We order $\Sigma_m$ lexicographically, that is $s\prec t$ if at the first index where they differ one has $s_r=0$ and $t_r=1$.

Consider the full binary tree of depth $m$. Its vertices are the prefixes
\[
V=\{\varnothing\}\cup\{0,1\}\cup\cdots\cup\{0,1\}^{m},
\]
and its directed edges are of the form $(v\to v0)$ and $(v\to v1)$ where $v\in\{0,1\}^{<m}$. Let $\mathsf E$ denote the set of directed edges. Define the Hilbert space
\[
\HH:=\ell_2(\mathsf E)
\]
with orthonormal basis $\{e_{(v\to vb)}:(v\to vb)\in\mathsf E\}$.

For each $s\in\Sigma_m$, define vectors $x_s,y_s\in \HH$ by
\begin{align}
x_s &:=  m^{-\frac12}\sum_{r=1}^m e_{(s_{\le r-1}\to s_{\le r})}, \label{eq:def-xs}\\
% y_s &:= \frac{1}{\sqrt m}\sum_{r=1}^m e_{(s_{\le r-1}\to s_{\le r-1}0)}. \label{eq:def-ys}
y_s &:= m^{-\frac12}\sum_{\substack{1\le r\le m\\ s_r=1}} e_{(s_{\le r-1}\to s_{\le r-1}0)}. \label{eq:def-ys}
\end{align}
We have the following easy observation. 

\begin{lemma}
\label{lem:unit}
For every $s\in\Sigma_m$ one has $x_s,y_s\in\U$.
\end{lemma}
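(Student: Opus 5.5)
The plan is to compute the squared norms of $x_s$ and $y_s$ directly in the orthonormal basis $\{e_{(v\to vb)}\}$ of $\HH=\ell_2(\mathsf E)$. Each of the two vectors is $m^{-1/2}$ times a sum of basis vectors, so by Pythagoras its squared norm equals $m^{-1}$ times the number of \emph{distinct} edges appearing in the sum. The whole argument therefore reduces to checking that the edges indexed by different values of $r$ are pairwise distinct, and then counting them.

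For $x_s$, the edge used at step $r$ is $(s_{\le r-1}\to s_{\le r})$. Its source vertex $s_{\le r-1}$ has length $r-1$, so for different $r$ the source vertices have different lengths. Hence the $m$ edges are pairwise distinct basis elements, and
\[
\norm{x_s}^2 = m^{-1}\cdot m = 1.
\]
Thus $x_s$ is a unit vector and lies in $\U$.

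For $y_s$, the edge used at step $r$ is $(s_{\le r-1}\to s_{\le r-1}0)$, and it is included only when $s_r=1$. By the same length argument on the source vertex, these edges are again pairwise distinct. If $|s|$ denotes the number of indices $r$ with $s_r=1$, then
\[
\norm{y_s}^2 = m^{-1}|s| \le 1.
\]
Hence $y_s\in\U$ as well.

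There is no real obstacle here. The only point requiring care is the distinctness of the edges in each sum, so that Pythagoras applies without cross terms. This follows at once from the fact that the depth $r-1$ of the source vertex determines $r$.
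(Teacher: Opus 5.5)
Your proof is correct and matches the paper's own argument: both compute the norms through orthonormality of the distinct edge basis vectors, which gives $\|x_s\|=1$ and $\|y_s\|^2\le 1$. Your observation that the depth $r-1$ of the source vertex determines $r$ supplies the distinctness of the edges, which the paper asserts without justification.
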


\begin{proof}
Each of $x_s$ is the average of $m$ distinct basis vectors $e_e$ with orthonormal supports, so its norm is $1$. Each of $y_s$ is the average of at most $m$ distinct basis vectors $e_e$ with orthonormal supports, so its squared norm is at most $1$. 
\end{proof}

For $s,t\in\Sigma_m$, note that
\begin{equation}\label{eq:inner-as-overlap}
\ip{x_s}{y_t}=\frac{1}{m}\Big|\Big\{1\le r\le m: (s_{\le r-1}\to s_{\le r})=(t_{\le r-1}\to t_{\le r-1}0),e_{(t_{\le r-1}\to t_{\le r-1}1)}\in t\Big\}\Big|.
\end{equation}
In words, $\ip{x_s}{y_t}$ is $1/m$ times the number of levels $r$ at which the $x$-edge of $s$ matches the left edge at the corresponding prefix of $t$, whenever $e_{(t_{\le r-1}\to t_{\le r-1}1)}$ is an edge in $t$. 

\begin{proposition}[Half-graph witness]
\label{prop:key}
If $s,t\in\Sigma_m$ satisfy $s \prec t$ in lexicographic order, then
\[
\ip{x_s}{y_t}-\ip{x_t}{y_s}=\frac{1}{m}.
\]
In particular, for all $s\prec t$ one has $\ip{x_s}{y_t}-\ip{x_t}{y_s}\ge 1/m$.
\end{proposition}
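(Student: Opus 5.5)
Since $x_s$ and $y_t$ are (scaled) sums of distinct basis vectors, I will compute $\ip{x_s}{y_t}$ by counting shared edges, using the overlap formula \eqref{eq:inner-as-overlap}. The edges in the support of $x_s$ are the path edges $(s_{\le r-1}\to s_{\le r})$, one at each depth $r$. The edges in the support of $y_t$ are the left edges $(t_{\le r-1}\to t_{\le r-1}0)$ at those depths $r$ with $t_r=1$. Every edge has a well-defined depth, so a match can only occur at equal levels $r$. At level $r$ the $x_s$-edge equals the $y_t$-edge exactly when three things hold: $s_{\le r-1}=t_{\le r-1}$, $s_r=0$, and $t_r=1$. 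Hence
\[
m\,\ip{x_s}{y_t}=\big|\{r:\ s_{\le r-1}=t_{\le r-1},\ s_r=0,\ t_r=1\}\big|.
\]

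Now fix $s\prec t$ and let $r_0$ be the first index where they differ, so $s_{r_0}=0$ and $t_{r_0}=1$. Any level $r$ counted above requires $s_{\le r-1}=t_{\le r-1}$, which forces $r\le r_0$. It also requires $s_r\neq t_r$, which forces $r\ge r_0$. So the only possible level is $r_0$, and it does qualify. Therefore $m\ip{x_s}{y_t}=1$.

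For the reversed pair, apply the same formula to $\ip{x_t}{y_s}$. A counted level would again have to be $r_0$, but it would need $t_{r_0}=0$ and $s_{r_0}=1$, which is false. Hence $\ip{x_t}{y_s}=0$, and the difference is exactly $1/m$.

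The only real step is the first one: checking that distinct (level, edge) pairs correspond to distinct basis vectors, so that the inner product literally counts coincidences. This is immediate because edges at different depths are different elements of $\mathsf E$. Beyond that there is no obstacle; the remaining work is just the case analysis at the first differing index.
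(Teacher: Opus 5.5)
Your proof is correct and follows essentially the same route as the paper. In both, you fix the first index $r_0$ where $s$ and $t$ differ and count coincident basis edges level by level, finding exactly one match for $\langle x_s,y_t\rangle$ and none for $\langle x_t,y_s\rangle$; your single criterion for a match at level $r$ ($s_{\le r-1}=t_{\le r-1}$, $s_r=0$, $t_r=1$) simply packages the paper's case split over $\ell<r_0$, $\ell=r_0$, $\ell>r_0$ into one condition.
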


\begin{proof}
    Fix $s\prec t$ and let $r$ be the first index where they differ. Then $s_r=0$, $t_r=1$, and suppose $s_{\le r-1}=t_{\le r-1}=p$. We compare the overlap counts in \eqref{eq:inner-as-overlap}.

\medskip\noindent
\emph{Claim 1: $\langle x_s,y_t\rangle=1/m$.}\medskip

At level $r$, the $x_s$-edge is $(p\to p0)$. Since $t_r=1$, the sum defining $y_t$ includes at level $r$ the left edge $(p\to p0)$, contributing $1/m$. For levels $\ell<r$, if $t_\ell=1$ then $y_t$ uses the left edge at that prefix while $x_s$ uses the right edge at that prefix since $s_\ell=t_\ell=1$, so there is no match. If $t_\ell=0$ then $y_t$ contributes nothing. For levels $\ell>r$, the prefixes differ so the edges are distinct. Hence there is exactly one match, at level $r$, giving $\langle x_s,y_t\rangle=1/m$.

\medskip\noindent
\emph{Claim 2: $\langle x_t,y_s\rangle=0$.}\medskip

At level $r$, since $s_r=0$, the sum defining $y_s$ includes nothing. For $\ell<r$, if $s_\ell=1$ then $y_s$ uses the left edge while $x_t$ uses the right edge at that prefix since $t_\ell=s_\ell=1$, so there is no match. If $s_\ell=0$ then $y_s$ contributes nothing. For $\ell>r$ the prefixes differ. Therefore there are no matches, and $\langle x_t,y_s\rangle=0$.\medskip

The two claims together imply the desired conclusion. 
\end{proof}

We now prove the main result of the section. 

\begin{theorem}
\label{thm:lower}
Let $\HH$ be a Hilbert space of dimension at least $2^{m+1}-2$ and let $\U$ be its unit ball.
Then the inner product $\ip{\cdot}{\cdot}:\U\times\U\to[-1,1]$ is not $(2^m,1/m)$-stable.
In particular, the inner product over unit ball is not $(k,\eps)$-stable for 
\[
k \le 2^{  1/\eps }
 = \exp\Big(\frac{\log 2}{\eps}\Big). 
\]
\end{theorem}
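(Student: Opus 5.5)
The plan is to assemble the theorem directly from the tree construction and Proposition~\ref{prop:key}. First, I will check the dimension requirement. The edge set $\mathsf E$ of the full binary tree of depth $m$ has one edge into each non-root vertex, so
\[
|\mathsf E|=\sum_{r=1}^m 2^r=2^{m+1}-2 .
\]
Hence $\ell_2(\mathsf E)$ embeds isometrically into any Hilbert space $\HH$ with $\dim\HH\ge 2^{m+1}-2$, by sending the basis $\{e_e\}$ to an orthonormal family in $\HH$. Such an embedding preserves inner products and norms. It therefore carries the vectors $x_s,y_s$ of \eqref{eq:def-xs}--\eqref{eq:def-ys} into $\U$, by Lemma~\ref{lem:unit}, and leaves all the quantities $\ip{x_s}{y_t}$ unchanged. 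In the complex case the real span of the orthonormal family is a real subspace, and $\Re\langle\cdot,\cdot\rangle$ agrees with the real inner product on it, so the same argument applies.

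Next, I will enumerate $\Sigma_m$ in increasing lexicographic order as $s^{(1)}\prec s^{(2)}\prec\cdots\prec s^{(2^m)}$; since $\prec$ is a total order on $\Sigma_m$, this lists all $2^m$ strings. I then set $x_i:=x_{s^{(i)}}$ and $y_i:=y_{s^{(i)}}$. For $i<j$ we have $s^{(i)}\prec s^{(j)}$, so Proposition~\ref{prop:key} gives
\[
\ip{x_i}{y_j}-\ip{x_j}{y_i}=1/m .
\]
This is a $(2^m,1/m)$-half-graph in the sense of Definition~\ref{def:stability}, so the inner product is not $(2^m,1/m)$-stable.

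For the ``in particular'' clause, I will take $m=1/\eps$, ignoring integrality as the paper's conventions allow; for non-integer $1/\eps$ I would take $m=\lfloor 1/\eps\rfloor$, so that $1/m\ge\eps$. The configuration above then has gap $1/m\ge\eps$. Any $k\le 2^m$ is handled by restricting to the first $k$ indices, since a sub-configuration of a half-graph is again a half-graph. This yields non-stability for all $k\le 2^{1/\eps}=\exp(\log 2/\eps)$, where one should work in $\HH$ of sufficiently large (e.g.\ infinite) dimension.

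There is no real obstacle here, since the content lies entirely in Proposition~\ref{prop:key}. The only points needing care are the edge count $2^{m+1}-2$ and the monotonicity in $k$: the restriction to the first $k$ indices gives it, and it is what turns the single statement at $k=2^m$ into the range $k\le 2^m$.
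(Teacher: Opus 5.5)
Your proposal is correct and follows the paper's proof: enumerate $\Sigma_m$ lexicographically, set $x_i:=x_{s^{(i)}}$, $y_i:=y_{s^{(i)}}$, and invoke Proposition~\ref{prop:key} with $m=1/\eps$. The extra bookkeeping you supply is left implicit in the paper but is accurate. This includes the edge count $|\mathsf E|=2^{m+1}-2$ that justifies the dimension hypothesis, the isometric embedding of $\ell_2(\mathsf E)$ into $\HH$, the complex case, and restricting to the first $k$ indices to cover all $k\le 2^m$.
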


\begin{proof}
Let $m=  1/\eps $ and construct $\HH=\ell_2(\mathsf E)$ and vectors $\{x_s,y_s\}_{s\in\Sigma_m}\subseteq \HH$ as in \eqref{eq:def-xs} and \eqref{eq:def-ys}. By Lemma~\ref{lem:unit}, all these vectors lie in $\U$.

Enumerate $\Sigma_m$ in increasing lexicographic order as $s^{(1)}\prec \cdots\prec s^{(2^m)}$ and set
\[
x_i:=x_{s^{(i)}},\qquad y_i:=y_{s^{(i)}}.
\]
for $1\le i\le 2^m$. 
Then for every $i<j$, Proposition~\ref{prop:key} gives
\[
\ip{x_i}{y_j}-\ip{x_j}{y_i}=\frac{1}{m}\ge \eps.
\]
Thus the family $\{x_i\}_{i=1}^{2^m}$ and $\{y_i\}_{i=1}^{2^m}$ witnesses that the inner product fails to be $(2^m,\eps)$-stable.
\end{proof}

\section{An exponential upper bound via the discrete Hilbert transform}\label{sec:upper}

The following theorem is the upper bound part of Theorem~\ref{thm:inner product}. 

\begin{theorem}[Upper bound for the inner product]
\label{thm:upper}
Let $\HH$ be a Hilbert space, $\eps\in(0,1)$, and $\U\subseteq \HH$ be the unit ball. Then the inner product $\ip{\cdot}{\cdot}:\U\times \U\to[-1,1]$ is $(k,\eps)$-stable for
\[
k \ge \exp\Big(\frac{\pi}{\eps}\Big).
\]
\end{theorem}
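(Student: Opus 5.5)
Assume $x_1,\dots,x_k,y_1,\dots,y_k\in\U$ satisfy $\ip{x_i}{y_j}-\ip{x_j}{y_i}\ge\eps$ for all $i<j$. The plan is to show $k<\exp(\pi/\eps)$ by testing this configuration against the discrete Hilbert transform. Set $a_{ij}:=\ip{x_i}{y_j}$ and let $H$ be the $k\times k$ matrix with $H_{ij}=\frac{1}{j-i}$ for $i\ne j$ and $H_{ii}=0$. $H$ is antisymmetric, and by Hilbert's inequality (in the sharp Schur / Montgomery–Vaughan form) its operator norm on $\ell_2^k$ is at most $\pi$.

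\emph{Lower bound on the pairing.} Consider
\[
S:=\sum_{i\ne j} H_{ij}\,a_{ij}=\sum_{i<j}\frac{a_{ij}-a_{ji}}{j-i}.
\]
The second expression uses the antisymmetry of $H$. Each summand is at least $\eps/(j-i)$, so
\[
S\ge \eps\sum_{i<j}\frac1{j-i}=\eps\sum_{d=1}^{k-1}\frac{k-d}{d}=\eps\big(kH_{k-1}-(k-1)\big),
\]
where $H_{k-1}$ is the harmonic number. This is roughly $\eps k(\log k-1+\gamma)$.

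\emph{Upper bound on the pairing.} Write $S=\sum_{i,j}H_{ij}\ip{x_i}{y_j}$. Expand each vector in an orthonormal basis of the finite-dimensional span, $x_i=\sum_\ell x_i^{(\ell)}e_\ell$ and $y_j=\sum_\ell y_j^{(\ell)}e_\ell$ (use real parts in the complex case). Then
\[
S=\sum_\ell \big\langle X^{(\ell)},HY^{(\ell)}\big\rangle_{\ell_2^k}\le \|H\|_{\mathrm{op}}\Big(\sum_\ell\|X^{(\ell)}\|^2\Big)^{1/2}\Big(\sum_\ell\|Y^{(\ell)}\|^2\Big)^{1/2},
\]
where $X^{(\ell)}=(x_i^{(\ell)})_i$ and $Y^{(\ell)}=(y_j^{(\ell)})_j$. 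The right side is $\|H\|\big(\sum_i\|x_i\|^2\big)^{1/2}\big(\sum_j\|y_j\|^2\big)^{1/2}\le \pi k$. This is the Grothendieck-type step: because the kernel is a bounded operator, vector-valued entries cost nothing extra.

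\emph{Conclusion.} Combining the two bounds gives $\eps\,(H_{k-1}-1+1/k)\le\pi$, i.e.\ $H_{k-1}\le \pi/\eps+1-1/k$. Since $H_{k-1}\ge\log k-1+\gamma+\dots$, this crude comparison only yields $\log k\le\pi/\eps+O(1)$, which misses the claimed threshold $k\ge\exp(\pi/\eps)$ by an additive constant. I expect this to be the main obstacle.

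To remove the constant, I would first replace the flat weights by the truncated or sharper kernel $H_{ij}=\frac{1}{j-i}$ restricted to $|i-j|$ in a suitable range. Alternatively, I would use the exact identity $\sum_{i<j}\frac{1}{j-i}=kH_k-k$ together with the refined Montgomery–Vaughan norm bound $\|H\|\le\pi$, and compare via $H_{k-1}>\log k$ more carefully, since $kH_{k-1}-(k-1)\ge k\log k-\dots$. If the constants still do not close, a weighted version $\sum_{i\ne j}\frac{w_iw_j}{j-i}a_{ij}$ with weights concentrating on the middle indices should improve the harmonic sum to $\log k$ exactly. The other steps are routine.
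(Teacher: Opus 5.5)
Your route is the same as the paper's: test against the Hilbert kernel $\frac{1}{j-i}$, bound $|S|\le\pi k$ by Hilbert's inequality plus the coordinatewise vector-valued extension, and bound $S$ from below by $\eps$ times the harmonic sum. The obstacle you flag is genuine, and sharper estimation of the harmonic sum will not remove it. Indeed $\sum_{i<j}\frac{1}{j-i}=k(H_k-1)$ and $H_k-1<\log k$ for every $k\ge 2$. So this kernel together with $|S|\le\pi k$ yields exactly $H_k-1\le\pi/\eps$, and the argument only proves stability for $k\ge e^{1-\gamma}\exp(\pi/\eps)$.

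The paper's proof has the same defect, in one line. It proves $S\ge\eps(n\log n-n)$ and then writes $\eps n\log n\le S$, which does not follow; its argument really gives $\log n\le\pi/\eps+1$. So neither your proposal nor the paper's argument reaches the stated threshold.

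Your suggested repairs do not close the gap:
\begin{itemize}
\item Truncating the kernel only removes positive terms from the lower bound. Moreover, for $1\ll L\ll k$ the norm of the truncated kernel approaches $2\,\mathrm{Si}(\pi)\approx 3.70>\pi$ (Gibbs phenomenon).
\item The Montgomery--Vaughan bound is the same constant $\pi$ you already used.
\item Weighting is provably insufficient. For $S_w=\sum_{i\ne j}\frac{w_iw_j}{j-i}a_{ij}$, the inequality $2w_iw_j\le w_i^2+w_j^2$ gives
\[
\sum_{i<j}\frac{w_iw_j}{j-i}\le\frac12\max_i\bigl(H_{i-1}+H_{k-i}\bigr)\,\|w\|_2^2\le H_{\lfloor k/2\rfloor}\,\|w\|_2^2 .
\]
Against $|S_w|\le\pi\|w\|_2^2$, the best possible conclusion is therefore $\eps\le\pi/H_{\lfloor k/2\rfloor}$, and $H_{\lfloor k/2\rfloor}<\log k$ for $k\ge 9$.
\end{itemize}

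The missing idea is to replace the Hilbert kernel by one adapted to the sign pattern of the matrix $J_k$ of Appendix~B. Take its polar factor $P=\sum_{r}\mathrm{i}\,\mathrm{sgn}(\cot\theta_r)\,v_rv_r^{*}$, with $\theta_r=\frac{(2r-1)\pi}{2k}$ and $v_r$ as there. For even $k$ this is $P_{ij}=\frac{2}{k\sin(\pi(j-i)/k)}$ when $j-i$ is odd, and $P_{ij}=0$ otherwise. Its key properties are:
\begin{itemize}
\item $P$ is real and antisymmetric, with $P_{ij}\ge 0$ for $i<j$;
\item $\|P\|_{\mathrm{op}}\le 1$;
\item $\sum_{i<j}P_{ij}=\frac12\operatorname{tr}(P^{\mathsf T}J_k)=\frac12\|J_k\|_{S_1}=\frac{kc_k}{2}$.
\end{itemize}
Running your Cauchy--Schwarz step with $P$ in place of the Hilbert kernel gives $\eps\,kc_k/2\le k$, i.e.\ $\eps\le 2/c_k$. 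This is exactly the gap realized by the Appendix~B construction, so the bound is sharp.

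To conclude you need the reverse of the appendix estimate on $c_k$. Since $\frac1x-\cot x$ is convex on $[0,\pi/2]$, one has $\cot x\ge\frac1x-\frac{4x}{\pi^2}$ there. Summing gives $c_k>\frac{2}{\pi}\log k$ for all $k\ge 2$; for even $k$ one even gets $c_k\ge\frac{2}{\pi}(\log k+\log 2+\gamma-\frac12)$. Hence every $(k,\eps)$-half-graph has $\eps<\pi/\log k$, which is precisely stability for $k\ge\exp(\pi/\eps)$.
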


It is well-known that bilinear forms in Hilbert spaces is controlled by Grothendieck's inequality \cite{GrothendieckIneq} up to universal constants. In particular, for a matrix $A=(a_{ij})$, define the cut norm
\begin{equation}\label{eq: cut}
\| A\|_{\pm1} = \sup _{\eps_i,\delta_j=\pm1} \sum a_{ij}\eps_i\delta_j,
\end{equation}
then Grothendieck's inequality asserts that for any $\|x_i\|,\|y_j\|\le 1$, 
\[
\sum a_{ij} \langle x_i,y_j\rangle \leq K_G \|A\|_{\pm1}. 
\]
Here $K_G$ a universal constant. Determining exact value of $K_G$ is a well-known open problem in the field, notably Krivine's rounding scheme \cite{Krivine_rounding} provide the state of art upper bound, Braverman,  Makarychev, Makarychev, and Naor \cite{BMMN} show that the exact value of $K_G$ should be strictly smaller than Krivine's bound.

Now we assume $A$ is a real skew-symmetric matrix, that is $a_{ij}=-a_{ji}$, and with diagonal entries all equal to $0$. Given $\eps>0$, assume $x_1,\dots,x_k,y_1,\dots,y_k$ is a $k$-half-graph witnessed by the inner product in a Hilbert space $\HH$. Hence whenever $a_{ij}\geq0$ when $i<j$
\[
\sum a_{ij}\langle x_i,y_j\rangle=\sum_{i<j} a_{ij} \big(\langle x_i,y_j\rangle - \langle x_j,y_i\rangle\big)\geq \eps \sum_{i<j} a_{ij}. 
\]
On the other hand, by Grothendieck's inequality,
\[
\sum a_{ij}\langle x_i,y_j\rangle \leq K_G \|A\|_{\pm 1}, 
\]
hence
\[
\eps \leq \frac{K_G \|A\|_{\pm1}}{\sum_{i<j}a_{ij}},
\]
hence the whole problem reduces to designing a skew-symmetric matrix $A$ with large $\sum_{i<j}a_{ij}$ and small $\|A\|_{\pm1}$.

In our proof we make use of the discrete Hilbert transform. Hence our matrix is explicit, we do not need to apply Grothendieck-type inequalities in our proof. Nevertheless, the author believes the above argument reveals the analytic picture behind the proof. The idea of introducing the Hilbert transform in this context goes back to Tao~\cite{TaoBlog}, in his simplified proof of the algebraic regularity lemma. 

\begin{lemma}[Discrete Hilbert transform]
\label{lem:hilbert}
Fix $n\ge 2$. Let $(u_i)_{i=1}^n\subset \mathbb C$.
Then
\[
\left|\sum_{1\le i\ne j\le n}\frac{1}{i-j}u_i \overline{u_j}\right|
\;\le\;
\pi \sum_{i=1}^n |u_i|^2.
\]
Equivalently, the $n\times n$ matrix $A=(a_{ij})$ with $a_{ii}=0$ and
$a_{ij}=1/(i-j)$ for $i\ne j$ satisfies $\|A\|_{\mathrm{op}}\le \pi$.
\end{lemma}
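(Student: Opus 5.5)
The plan is to embed the finite matrix into a Toeplitz operator whose symbol we can write down and bound pointwise. Since $a_{ij}=1/(i-j)$ depends only on $i-j$, $A$ is a finite section of the bi-infinite Toeplitz matrix $T=(t_{i-j})_{i,j\in\mathbb Z}$. Here $t_0=0$ and $t_m=1/m$ for $m\neq 0$. Extend $u$ by zero outside $\{1,\dots,n\}$. Then
\[
\sum_{i\ne j}\frac{u_i\overline{u_j}}{i-j}=\langle Tu,u\rangle_{\ell_2(\mathbb Z)},
\]
so it suffices to show $\|T\|_{\mathrm{op}}\le\pi$ on $\ell_2(\mathbb Z)$. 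Restricting to finitely supported $u$ keeps every sum finite, so there are no convergence issues on the left-hand side.

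Next I would pass to Fourier series, writing $\hat u(\theta)=\sum_j u_j e^{ij\theta}$ for $\theta\in[-\pi,\pi]$. Parseval gives $\sum|u_j|^2=\frac1{2\pi}\int_{-\pi}^{\pi}|\hat u|^2\,d\theta$. The bilinear form then becomes
\[
\langle Tu,u\rangle=\frac{1}{2\pi}\int_{-\pi}^{\pi}\sigma(\theta)\,|\hat u(\theta)|^2\,d\theta,
\qquad
\sigma(\theta)=\sum_{m\ne0}\frac{e^{im\theta}}{m}=2i\sum_{m\ge1}\frac{\sin m\theta}{m}.
\]
The classical sawtooth series gives $\sum_{m\ge1}\frac{\sin m\theta}{m}=\frac{\pi-\theta}{2}$ for $\theta\in(0,2\pi)$. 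Hence $\sigma(\theta)=i(\pi-\theta)$ on $(0,2\pi)$, up to the sign convention, and therefore $|\sigma|\le\pi$ almost everywhere. It follows that $|\langle Tu,u\rangle|\le \pi\cdot\frac1{2\pi}\int|\hat u|^2=\pi\sum|u_j|^2$. The operator-norm formulation is the same argument applied to $|\langle Au,v\rangle|$ with two vectors: by Cauchy–Schwarz on the Fourier side, $|\langle Tu,v\rangle|\le\|\sigma\|_\infty\|u\|\|v\|$.

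The main obstacle is justifying the Fourier identity. The series for $\sigma$ converges only conditionally and is not uniformly convergent near $\theta=0$. I would handle this by working directly with the finite trigonometric polynomials. The expression
\[
\sum_{i\ne j}\frac{u_i\overline{u_j}}{i-j}
\]
equals $\frac1{2\pi}\int\sigma_N|\hat u|^2$, where $\sigma_N$ is the partial sum over $0<|m|\le N$ with $N\ge n$. This identity holds exactly because $|\hat u|^2$ only has frequencies of size less than $n$. One then needs $\sup_N\|\sigma_N\|_\infty$ under control, but that only gives a bound like $\pi+O(1)$ (Gibbs phenomenon). To get exactly $\pi$, I would instead note that $\sigma_N\to\sigma$ in $L^2$, since the coefficients are square-summable. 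Because $|\hat u|^2$ is a fixed trigonometric polynomial, the pairing $\int\sigma_N|\hat u|^2$ is constant for $N\ge n$ and converges to $\int\sigma|\hat u|^2$. This yields the exact constant $\pi$ from $\|\sigma\|_\infty=\pi$.

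An alternative I would keep in reserve is Schur's test with weights $w_i=(i-\tfrac12)^{-1/2}$, the route of Hilbert's inequality. It yields a constant $\pi$ for the absolute-value kernel in the standard way, but the Fourier argument is cleaner and exploits the skew-symmetry.
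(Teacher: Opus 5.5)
The paper does not prove this lemma. It quotes it as the classical discrete Hilbert inequality and uses it as a black box in Theorem~\ref{thm:upper}, so there is nothing to compare against. Your Fourier argument is the standard proof, and it is correct.

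The essential facts all check out. For $u$ supported in $\{1,\dots,n\}$, $|\hat u|^2$ is a trigonometric polynomial with frequencies in $(-n,n)$. The function $g(\theta)=i(\pi-\theta)$ on $(0,2\pi)$ has $\hat g(0)=0$ and $\hat g(m)=1/m$ for $m\neq0$. Hence
\[
\sum_{j\ne k}\frac{u_j\overline{u_k}}{k-j}=\frac1{2\pi}\int_0^{2\pi}g\,|\hat u|^2\,d\theta,
\]
and this is bounded by $\pi\sum|u_j|^2$ because $|g|\le\pi$.

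Two simplifications are available.
\begin{itemize}
\item The partial-sum and $L^2$-limit detour is unnecessary. Since $|\hat u|^2$ (or $\hat u\overline{\hat v}$) is a trigonometric polynomial, pairing the explicit function $g$ against it gives a finite combination of the coefficients $\hat g(m)$. One integration by parts computes these, and neither conditional convergence nor the Gibbs phenomenon ever enters.
\item The operator-norm formulation follows immediately from the quadratic-form bound. $A$ is real skew-symmetric, hence normal, so $\|A\|_{\mathrm{op}}$ equals its numerical radius. Your two-vector Cauchy--Schwarz version is equally fine.
\end{itemize}

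Your fallback plan would not work. Schur's test only controls the absolute kernel $1/|i-j|$, and that kernel is not bounded uniformly in $n$. Testing on the all-ones vector gives
\[
\frac1n\sum_{i\ne j}\frac1{|i-j|}=2\sum_{d=1}^{n-1}\frac1d-2+\frac2n\sim 2\log n.
\]
This is the same sum that produces the lower bound $\eps(n\log n-n)$ in the proof of Theorem~\ref{thm:upper}. Schur's test with weights $(i-\tfrac12)^{-1/2}$ gives the constant $\pi$ for the Hilbert matrix $1/(i+j-1)$, not for $1/(i-j)$. For the skew kernel, the sign cancellation is essential, and the bounded symbol $i(\pi-\theta)$ is exactly what captures it.
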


We will also use the following standard vector-valued extension: matrices bounded on $\ell_2$
remain bounded when acting on $\ell_2$-sequences with values in a Hilbert space.

\begin{lemma}[Vector-valued $\ell_2$ bound]
\label{lem:vector}
Let $A=(a_{ij})_{i,j=1}^n$ be a complex matrix with $\|A\|_{\mathrm{op}}\le M$.
Then for any Hilbert space $\HH$ and any vectors $y_1,\dots,y_n\in \HH$,
\[
\sum_{i=1}^n \left\|\sum_{j=1}^n a_{ij}y_j\right\|^2 \le M^2 \sum_{j=1}^n \|y_j\|^2.
\]
\end{lemma}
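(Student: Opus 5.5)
The plan is to reduce to the scalar case by coordinates. First I will pass to the real case: a complex Hilbert space is a real Hilbert space under $\Re\langle\cdot,\cdot\rangle$ with the same norm, and a complex matrix acting on $\HH$-valued sequences can be split into real and imaginary parts. The cleaner route, which I will follow, is to note that a complex Hilbert space with $\mathbb C$-linear action is unitarily equivalent to $\ell_2(I;\mathbb C)$ for some index set $I$, by choosing an orthonormal basis $(e_\alpha)_{\alpha\in I}$. By finite-dimensionality of the span, I can replace $\HH$ by the finite-dimensional subspace spanned by $y_1,\dots,y_n$, so that $I$ is finite. Then I write $y_j=\sum_{\alpha} c_{j\alpha}e_\alpha$ with $c_{j\alpha}\in\mathbb C$.

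With this expansion,
\[
\sum_{j} a_{ij}y_j=\sum_\alpha\Big(\sum_j a_{ij}c_{j\alpha}\Big)e_\alpha .
\]
By Parseval, the left-hand side of the claimed inequality becomes
\[
\sum_\alpha\sum_i\Big|\sum_j a_{ij}c_{j\alpha}\Big|^2=\sum_\alpha \|A c_{\cdot\alpha}\|_{\ell_2^n}^2,
\]
where $c_{\cdot\alpha}=(c_{1\alpha},\dots,c_{n\alpha})\in\mathbb C^n$. Applying the scalar hypothesis $\|A\|_{\mathrm{op}}\le M$ to each column gives the bound
\[
\sum_\alpha M^2\|c_{\cdot\alpha}\|^2 = M^2\sum_j\sum_\alpha|c_{j\alpha}|^2=M^2\sum_j\|y_j\|^2 ,
\]
using Parseval again. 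This is the statement of Lemma~\ref{lem:vector}.

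The only point needing care is the scalar field. If $\HH$ is a real Hilbert space and $A$ is complex, then $\sum_j a_{ij}y_j$ is not defined in $\HH$. In that case I interpret the sum in the complexification $\HH\oplus i\HH$, whose norm restricts to the original norm on $\HH$, and run the argument there; alternatively the coefficients $c_{j\alpha}$ are real and the same computation applies verbatim. I expect no real obstacle: the interchange of sums is finite after restricting to the span, and the whole argument is the observation that $A\otimes \mathrm{Id}_{\HH}$ has the same operator norm as $A$.
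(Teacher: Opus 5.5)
Your proof is correct and is essentially the paper's argument: expand the $y_j$ in an orthonormal basis of their (finite-dimensional) span, apply Parseval, bound each coordinate column using $\|A\|_{\mathrm{op}}\le M$, and reassemble with Parseval. Your remark about interpreting the sums in the complexification when $\HH$ is real and $A$ is complex is a harmless refinement that the paper leaves implicit.
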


\begin{proof}
Let $(e_\ell)_{\ell\in\Lambda}$ be an orthonormal basis of the closed span of $\{y_j\}$.
Write $y_j=\sum_\ell y_{j,\ell} e_\ell$ with $y_{j,\ell}\in\mathbb C$. Then
\[
\sum_{i=1}^n \left\|\sum_{j=1}^n a_{ij}y_j\right\|^2
=
\sum_{i=1}^n \sum_{\ell}\left|\sum_{j=1}^n a_{ij}y_{j,\ell}\right|^2
=
\sum_{\ell}\|A(y_{1,\ell},\dots,y_{n,\ell})\|_2^2
\le
M^2\sum_{\ell}\sum_{j=1}^n |y_{j,\ell}|^2,
\]
and the right-hand side equals $M^2\sum_{j=1}^n\|y_j\|^2$ by Parseval.
\end{proof}

\begin{proof}[Proof of Theorem~\ref{thm:upper}]
Assume for contradiction that there exist $x_1,\dots,x_n\in \U$ and $y_1,\dots,y_n\in\U$
such that for all $i>j$,
\begin{equation}\label{eq:halfgraph}
\ip{x_i}{y_j}-\ip{x_j}{y_i}\ge \eps.
\end{equation}
Let $A=(a_{ij})_{i,j=1}^n$ be the $n\times n$ matrix with $a_{ii}=0$ and when $i\neq j$
\[
a_{ij}=\frac{1}{i-j}.
\]
Note that $A$ is real skew-symmetric and by Lemma~\ref{lem:hilbert}, $\|A\|_{\mathrm{op}}\le \pi$.

Consider the scalar
\[
S:=\sum_{1\le i\ne j\le n} a_{ij}\ip{x_i}{y_j}.
\]
By linearity of the inner product,
\[
S=\sum_{i=1}^n \ip{x_i}{\sum_{j\ne i} a_{ij}y_j}.
\]
Applying Cauchy--Schwarz and Lemma~\ref{lem:vector}  we obtain
\begin{align*}
|S|
&\le \left(\sum_{i=1}^n \|x_i\|^2\right)^{1/2}
\left(\sum_{i=1}^n \left\|\sum_{j\ne i} a_{ij}y_j\right\|^2\right)^{1/2}
\\
&\le \left(\sum_{i=1}^n \|x_i\|^2\right)^{1/2}
 \|A\|_{\mathrm{op}} \left(\sum_{j=1}^n \|y_j\|^2\right)^{1/2}
\le \pi n,
\end{align*}
since $\|x_i\|\le 1$ and $\|y_j\|\le 1$.

On the other hand, using $a_{ji}=-a_{ij}$ and grouping terms in pairs $(i,j)$ with $i<j$,
\begin{align*}
S
&=\sum_{1\le i<j\le n} a_{ij}\ip{x_i}{y_j} + a_{ji}\ip{x_j}{y_i}
=\sum_{1\le i<j\le n}\frac{1}{i-j}\big(\ip{x_i}{y_j}-\ip{x_j}{y_i}\big)
\\
&=\sum_{1\le i<j\le n}\frac{1}{j-i}\big(\ip{x_j}{y_i}-\ip{x_i}{y_j}\big).
\end{align*}
By \eqref{eq:halfgraph}, each bracketed term is at least $\eps$, hence
\[
S\ge \eps\sum_{1\le i<j\le n}\frac{1}{j-i}
\ge \eps(n\log n-n).
\]

Combining the two bounds gives
\[
\eps n\log n \le S \le |S|\le \pi n,
\]
so $\log n \le \pi/\eps$, i.e. $n\le \exp(\pi/\eps)$.
Therefore no such configuration \eqref{eq:halfgraph} can exist for $n>\exp(\pi/\eps)$, which exactly means that $\ip{\cdot}{\cdot}$ is $(k,\eps)$-stable for $k\ge   \exp(\pi/\eps) $.
\end{proof}

%=================================================================
\section{Power and H\"older connectives of the Hilbert inner product}
\label{sec:powers}

In this section we record quantitative stability bounds for predicates obtained by composing the Hilbert
inner product with power and H\"older type connectives. 
Throughout, $\HH$ is a real Hilbert space, $\U$ its unit ball, and $\langle\cdot,\cdot\rangle$
the inner product.  The complex case is identical by replacing $\langle\cdot,\cdot\rangle$ by $\Re\langle\cdot,\cdot\rangle$.

In general, as the binary predicates are no longer bilinear, Grothendieck's inequality type idea used in the upper bound for the inner product case stop working, and the lower bound constructions for the inner product case are usually not sharp. Hence we are going to treat connectives separately via different methods.

Let us first look at the case when $\beta\in(0,1]$ is fixed, and define
\[
f_\beta(x,y):=\big(\langle x,y\rangle\big)_+^\beta,
\qquad\text{where }t_+:=\max\{t,0\}.
\]
The lower bound on stability can be obtained directly from our localized half graph construction. 

\begin{corollary}[Sharp lower bounds for $t^\beta$, $0<\beta\le 1$]
\label{cor:lower beta}
For every integer $m\ge 1$, the predicate $f_\beta$ on $\U\times\U$ is not $(2^m,m^{-\beta})$-stable.
Equivalently, for every $\eps\in(0,1)$, letting $m:=  \eps^{-1/\beta} $ gives a $(k,\eps)$-half-graph with
\[
k=2^m \ge \exp\big(c\eps^{-1/\beta}\big),
\]
where $c>0$ is an absolute constant. 
\end{corollary}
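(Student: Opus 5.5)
We reuse the tree construction of Section~\ref{sec:lower} without modification. Fix $m\ge1$, build $\HH=\ell_2(\mathsf E)$ and the vectors $x_s,y_s$ of \eqref{eq:def-xs} and \eqref{eq:def-ys}, and list $\Sigma_m$ in increasing lexicographic order as $s^{(1)}\prec\cdots\prec s^{(2^m)}$. Put $x_i=x_{s^{(i)}}$ and $y_i=y_{s^{(i)}}$. Lemma~\ref{lem:unit} places every one of these vectors in $\U$. What matters is that Proposition~\ref{prop:key} gives more than the difference: its two claims identify the individual values. Whenever $s\prec t$ we have $\ip{x_s}{y_t}=1/m$ and $\ip{x_t}{y_s}=0$.

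Apply the connective to these exact values. For $i<j$,
\[
f_\beta(x_i,y_j)-f_\beta(x_j,y_i)=\big((1/m)_+\big)^\beta-(0_+)^\beta=m^{-\beta}-0=m^{-\beta},
\]
since $\beta>0$ forces $0^\beta=0$. Consequently $f_\beta$ admits a $(2^m,m^{-\beta})$-half-graph, which is exactly the failure of $(2^m,m^{-\beta})$-stability. This is where the flatness of $t^\beta$ at $0$ helps: the construction separates "edge'' and "non-edge'' at the scale $0$ versus $1/m$, and there $t^\beta$ magnifies the gap from $1/m$ up to $m^{-\beta}$.

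For the form stated in terms of $\eps$, take $m=\lceil\eps^{-1/\beta}\rceil$. Then $m\ge\eps^{-1/\beta}$, so $m^{-\beta}\le\eps$, which points the wrong way for a half-graph requirement. The fix is to choose instead $m=\lfloor\eps^{-1/\beta}\rfloor\ge1$, valid because $\eps<1$. This gives $m^{-\beta}\ge\eps$, and the inequality $f_\beta(x_i,y_j)-f_\beta(x_j,y_i)\ge\eps$ then holds for all $i<j$. For the size, $\lfloor u\rfloor\ge u/2$ when $u\ge1$, so
\[
k=2^m\ge\exp\big((\log 2/2)\,\eps^{-1/\beta}\big),
\]
and $c=\tfrac12\log2$ works. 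Any $k'\le k$ is handled by taking a sub-configuration.

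The argument contains no real obstacle. The one point needing care is the rounding direction for $m$, as in the paper's convention of ignoring floors and ceilings. Everything else rests on the exact values $1/m$ and $0$ from the proof of Proposition~\ref{prop:key}, not merely on their difference.
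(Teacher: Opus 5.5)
Your proof is correct and follows the paper's argument exactly. You take the exact values $\ip{x_s}{y_t}=1/m$ and $\ip{x_t}{y_s}=0$ from the two claims in the proof of Proposition~\ref{prop:key}, apply $t\mapsto t_+^\beta$ to get the gap $m^{-\beta}$, enumerate $\Sigma_m$ lexicographically, and take $m$ of order $\eps^{-1/\beta}$. Your explicit choice $m=\lfloor\eps^{-1/\beta}\rfloor$, giving $c=\tfrac12\log 2$, rounds in the correct direction; the paper suppresses this point through its floor/ceiling convention. One small wording slip: for $\beta<1$ it is the steepness of $t^\beta$ at $0$, not its flatness, that enlarges $1/m$ to $m^{-\beta}$.
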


\begin{proof}
Apply Proposition~\ref{prop:key}. For $s<t$ we have $\langle x_s,y_t\rangle=1/m$ and $\langle x_t,y_s\rangle=0$,
hence
\[
f_\beta(x_s,y_t)-f_\beta(x_t,y_s)=\Big(\frac{1}{m}\Big)^\beta-0=m^{-\beta}.
\]
Enumerate $\Sigma_m$ in lexicographic order to obtain a $(2^m,m^{-\beta})$-half-graph.
Taking $m=  \eps^{-1/\beta} $ gives $m^{-\beta}\ge \eps$ and $k=2^m\ge \exp(c\eps^{-1/\beta})$.
\end{proof}

We now show that the lower bound exponent $\eps^{-1/\beta}$ is also the correct order for the upper bound.
This is a direct consequence of the $\beta$-H\"older modulus of $t\mapsto t^\beta$ on $[0,1]$ together with the
$\exp(C/\eta)$ upper bound for the inner product (Theorem~\ref{thm:upper}).

\begin{lemma}
\label{lem:holder}
Fix $\beta\in(0,1]$. Then for all $s,t\in[0,1]$,
\[
|t^\beta-s^\beta|\le |t-s|^\beta.
\]
Equivalently, if $t^\beta-s^\beta\ge \eps$ then $t-s\ge \eps^{1/\beta}$.
\end{lemma}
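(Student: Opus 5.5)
Without loss of generality take $s\le t$; the case $t<s$ follows by swapping the roles of $s$ and $t$. Write $h:=t-s\in[0,1]$. Since both sides vanish when $\beta=1$ or $h=0$, assume $\beta\in(0,1)$ and $h>0$. The claim then reads
\[
(s+h)^\beta-s^\beta\le h^\beta .
\]
I would prove it through a monotonicity argument in $s$. Set $g(s):=(s+h)^\beta-s^\beta$ for $s\ge 0$. For $s>0$,
\[
g'(s)=\beta\big((s+h)^{\beta-1}-s^{\beta-1}\big)\le 0,
\]
because $u\mapsto u^{\beta-1}$ is nonincreasing on $(0,\infty)$ when $\beta\le 1$. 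The function $g$ is also continuous at $s=0$, so $g$ is nonincreasing on $[0,\infty)$. Hence $g(s)\le g(0)=h^\beta$, which is exactly the inequality.

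An alternative route, which avoids derivatives, is to use subadditivity of the concave function $\phi(u)=u^\beta$ with $\phi(0)=0$. Concavity gives $\phi(\lambda u)\ge\lambda\phi(u)$ for $\lambda\in[0,1]$. Applying this to $u=a+b$ with $\lambda=a/(a+b)$ and $\lambda=b/(a+b)$ and adding the two inequalities yields $\phi(a)+\phi(b)\ge\phi(a+b)$. Taking $a=s$ and $b=h$ then gives $t^\beta\le s^\beta+h^\beta$. Either route works, and I would present the subadditivity version because it is cleaner.

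For the equivalent formulation, suppose $t^\beta-s^\beta\ge\eps>0$. Then by the inequality just proved, $\eps\le |t-s|^\beta$. The hypothesis also forces $t>s$, because $u\mapsto u^\beta$ is increasing. Taking $1/\beta$-th powers, which is a monotone operation, gives $t-s\ge\eps^{1/\beta}$.

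The only point that needs any care is the endpoint $s=0$, where $u^{\beta-1}$ blows up. This is harmless: the concavity argument never differentiates, and in the derivative argument one simply uses continuity of $g$ at $0$.
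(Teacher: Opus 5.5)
Your proof is correct and follows the paper's route: the paper simply asserts $(s+\Delta)^\beta-s^\beta\le\Delta^\beta$ from concavity of $u\mapsto u^\beta$, and your subadditivity argument (or equivalently the monotonicity of $g$) supplies exactly that step, with the second formulation handled as the paper intends. One trivial slip: when $\beta=1$ the two sides coincide rather than vanish, but that case holds with equality and your argument covers it anyway.
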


\begin{proof}
The function $u\mapsto u^\beta$ is concave and increasing on $[0,1]$, and one has
$(s+\Delta)^\beta-s^\beta\le \Delta^\beta$ for $\Delta\ge 0$, which yields the stated inequality.
The contrapositive gives the second sentence.
\end{proof}

\begin{corollary}[Upper bound for $t^\beta$]
\label{cor:upper beta}
Fix $\beta\in(0,1]$ and let $f_\beta(x,y)=(\langle x,y\rangle)_+^\beta$ on $\U\times\U$.
Then there exists a constant $C_\beta<\infty$ such that for every $\eps\in(0,1)$ the predicate $f_\beta$ is $(k,\eps)$-stable for all
\[
k>\exp\big(C\eps^{-1/\beta}\big).
\]
More concretely, one may take $C=\pi$.
\end{corollary}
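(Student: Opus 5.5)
The idea is to reduce a half-graph for $f_\beta$ to a half-graph for the inner product itself at the smaller scale $\eta:=\eps^{1/\beta}$. We then invoke Theorem~\ref{thm:upper} with $\eta$ in place of $\eps$. Since $\eps\in(0,1)$ and $\beta\in(0,1]$, we have $\eta\in(0,1)$, so Theorem~\ref{thm:upper} applies. It says the inner product is $(k,\eta)$-stable for $k\ge\exp(\pi/\eta)=\exp(\pi\eps^{-1/\beta})$. This yields the claim with $C=\pi$, and in fact for all $k\ge \exp(\pi\eps^{-1/\beta})$, which covers the strict inequality in the statement.

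For the reduction, suppose $x_1,\dots,x_k,y_1,\dots,y_k\in\U$ satisfy $f_\beta(x_i,y_j)-f_\beta(x_j,y_i)\ge\eps$ for all $i<j$. Fix $i<j$ and set $a:=\langle x_i,y_j\rangle_+$ and $b:=\langle x_j,y_i\rangle_+$; both lie in $[0,1]$ because the vectors are in the unit ball. Then $a^\beta-b^\beta\ge\eps$, so Lemma~\ref{lem:holder} gives $a-b\ge\eps^{1/\beta}=\eta$.

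The only point needing care is removing the positive parts. Since $a^\beta\ge\eps>0$, we have $a>0$, hence $a=\langle x_i,y_j\rangle$. On the other side we always have $\langle x_j,y_i\rangle\le\langle x_j,y_i\rangle_+=b$. Therefore
\[
\langle x_i,y_j\rangle-\langle x_j,y_i\rangle\ge a-b\ge \eta \quad\text{for all } i<j,
\]
so the same vectors form a $(k,\eta)$-half-graph for the inner product on $\U$. By Theorem~\ref{thm:upper} this forces $k<\exp(\pi/\eta)$, which is the contrapositive of the desired stability.

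I expect no real obstacle. The only step that needs attention is the direction of the inequalities when the truncation $t\mapsto t_+$ is removed: the term with the larger value must be genuinely positive, while the smaller term can only decrease when the truncation is dropped.
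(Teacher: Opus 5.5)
Your proposal is correct and follows the paper's proof: Lemma~\ref{lem:holder} turns an $\eps$-gap for $f_\beta$ into an $\eps^{1/\beta}$-gap for the inner product, and Theorem~\ref{thm:upper} at scale $\eta=\eps^{1/\beta}$ then gives $C=\pi$. You also handle the truncation explicitly, using $a>0$ to drop the positive part on the larger term and $\langle x_j,y_i\rangle\le b$ on the smaller one, which makes precise the step the paper compresses into ``$t\mapsto t_+^\beta$ is increasing.''
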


\begin{proof}
Assume $f_\beta$ has a $(k,\eps)$-half-graph, witnessed by $x_1,\dots,x_k,y_1,\dots,y_k\in\U$ such that for all $i<j$,
\[
(\langle x_i,y_j\rangle)_+^\beta-(\langle x_j,y_i\rangle)_+^\beta\ge \eps.
\]
Since $t\mapsto (t)_+^\beta$ is increasing on $\RR$, Lemma~\ref{lem:holder} implies, for $i<j$,
\[
\langle x_i,y_j\rangle-\langle x_j,y_i\rangle  \ge\eps^{1/\beta}.
\]
Thus the inner product itself has a $(k,\eps^{1/\beta})$-half-graph.  Applying Theorem~\ref{thm:upper} at scale
$\eta=\eps^{1/\beta}$ yields $k\le \exp(C/\eta)=\exp(C\eps^{-1/\beta})$, as claimed.
\end{proof}

We next look at the integer power case.  In contrast to the H\"older case,
integer powers do \emph{not} worsen the exponential rate in $\eps$ on the upper-bound side. Here we use a tensor-power trick. 

\begin{lemma}[Tensor-power linearization]
\label{lem:tensor}
Fix $d\in\mathbb N$.  For any Hilbert space $\HH$ and any $x,y\in \HH$ one has
\[
\langle x,y\rangle^d=\big\langle x^{\otimes d},y^{\otimes d}\big\rangle_{\HH^{\otimes d}}.
\]
Moreover, if $\|x\|\le 1$ then $\|x^{\otimes d}\|\le 1$.
\end{lemma}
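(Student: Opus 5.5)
The plan is to verify the identity on elementary tensors directly from the definition of the inner product on $\HH^{\otimes d}$, and then deduce the norm bound from the identity itself. Recall that $\HH^{\otimes d}$ is the completion of the algebraic tensor product with respect to the inner product determined on elementary tensors by
\[
\big\langle u_1\otimes\cdots\otimes u_d,\; v_1\otimes\cdots\otimes v_d\big\rangle_{\HH^{\otimes d}}=\prod_{r=1}^d\langle u_r,v_r\rangle ,
\]
extended sesquilinearly (bilinearly in the real case). This is the standard construction; I would simply invoke it rather than re-derive that it is well defined and positive definite.

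First, I take $u_r=x$ and $v_r=y$ for every $r$. The defining formula then gives
\[
\langle x^{\otimes d},y^{\otimes d}\rangle=\prod_{r=1}^d\langle x,y\rangle=\langle x,y\rangle^d,
\]
which is the identity. In the complex case, where the paper works with $\Re\langle\cdot,\cdot\rangle$, the identity still holds for the complex inner product. For the real-valued predicate one applies it after passing to the underlying real space, as the paper's convention indicates, and for integer powers one only needs the identity in whichever form is used afterwards.

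Second, for the norm I specialise the identity to $y=x$:
\[
\|x^{\otimes d}\|^2=\langle x,x\rangle^d=\|x\|^{2d}\le 1 \qquad\text{whenever } \|x\|\le1 .
\]
Hence $x^{\otimes d}$ lies in the unit ball of $\HH^{\otimes d}$, which is again a Hilbert space. This is exactly what allows Theorem~\ref{thm:upper} to be applied there later.

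I do not expect any real obstacle. The only point needing care is the well-definedness of the tensor inner product, which is standard, so I would cite it.
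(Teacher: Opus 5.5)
Your proof is correct, but it takes a more direct route than the paper.

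The paper fixes an orthonormal basis and expands $x^{\otimes d}=\sum_{i_1,\dots,i_d}x_{i_1}\cdots x_{i_d}\,e_{i_1}\otimes\cdots\otimes e_{i_d}$. It then gets the identity from the coordinate factorization $\sum_{i_1,\dots,i_d}\prod_{r=1}^d x_{i_r}\overline{y_{i_r}}=\bigl(\sum_i x_i\overline{y_i}\bigr)^d$. You instead apply the defining product formula for the inner product on elementary tensors to $x^{\otimes d}$ and $y^{\otimes d}$ directly.

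What your version buys is that it is basis-free. It also avoids two points the paper leaves implicit when $\HH$ is infinite-dimensional. The first is convergence of the expansion of $x^{\otimes d}$ in $\HH^{\otimes d}$, which holds because the coefficients are square-summable with total $\|x\|^{2d}$. The second is computing the inner product termwise. The paper's computation is in effect a check of that same product formula in the concrete model of $\HH^{\otimes d}$ as $\ell_2$ over $d$-tuples of basis indices. So it is self-contained if one takes that model as the definition of the tensor product.

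Your norm bound via $y=x$ is the same as the paper's $\|x^{\otimes d}\|=\|x\|^d$. Your complex-case remark is also right: $(\Re\langle x,y\rangle)^d$ need not equal $\Re(\langle x,y\rangle^d)$. So for the paper's real-valued predicate, the lemma should be applied to the underlying real Hilbert space, as you say.
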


\begin{proof}
Choose an orthonormal basis and expand $x=\sum_i x_i e_i$, $y=\sum_i y_i e_i$.
Then $x^{\otimes d}=\sum_{i_1,\dots,i_d} x_{i_1}\cdots x_{i_d}e_{i_1}\otimes\cdots\otimes e_{i_d}$, and similarly for $y^{\otimes d}$.
Taking the inner product in $\HH^{\otimes d}$ gives
\[
\langle x^{\otimes d},y^{\otimes d}\rangle=\sum_{i_1,\dots,i_d} x_{i_1}\cdots x_{i_d}\overline{y_{i_1}\cdots y_{i_d}}
=\Big(\sum_i x_i\overline{y_i}\Big)^d=\langle x,y\rangle^d.
\]
Also $\|x^{\otimes d}\|=\|x\|^d$.
\end{proof}
The tensor-power trick would immediately give us the following upper bound. 

\begin{corollary}[Upper bound for $\langle x,y\rangle^d$]
\label{cor:upper integer power}
Fix $d\in\mathbb N$ and define $f_d(x,y):=\langle x,y\rangle^d$ on $\U\times\U$.
Then there exists $C<\infty$ such that $f_d$ is $(k,\eps)$-stable for all $k>\exp(C/\eps)$. More concretely, one may take $C=\pi$.
\end{corollary}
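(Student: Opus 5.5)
The plan is to transfer any half-graph for $f_d$ into a half-graph for the plain inner product in a larger Hilbert space, and then invoke Theorem~\ref{thm:upper} at the same scale $\eps$.

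Suppose $f_d$ has a $(k,\eps)$-half-graph on $\U\times\U$. That is, there are $x_1,\dots,x_k,y_1,\dots,y_k\in\U$ with
\[
\langle x_i,y_j\rangle^d-\langle x_j,y_i\rangle^d\ge\eps \quad\text{for all } i<j.
\]
Set $X_i:=x_i^{\otimes d}$ and $Y_i:=y_i^{\otimes d}$ in $\HH^{\otimes d}$, which is again a Hilbert space. By Lemma~\ref{lem:tensor} we have $\|X_i\|,\|Y_i\|\le 1$, so all these vectors lie in the unit ball of $\HH^{\otimes d}$. The same lemma gives
\[
\langle X_i,Y_j\rangle=\langle x_i,y_j\rangle^d .
\]
Hence $\langle X_i,Y_j\rangle-\langle X_j,Y_i\rangle\ge\eps$ for all $i<j$. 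This is exactly a $(k,\eps)$-half-graph for the inner product on the unit ball of $\HH^{\otimes d}$.

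Theorem~\ref{thm:upper} holds for an arbitrary Hilbert space and gives $k\le\exp(\pi/\eps)$. Therefore $f_d$ is $(k,\eps)$-stable for every $k>\exp(\pi/\eps)$, so $C=\pi$ works, independently of $d$.

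The only point needing care is the complex case, where predicates are read as real parts. There, $f_d$ would be $\Re(\langle x,y\rangle^d)$. Lemma~\ref{lem:tensor} still gives $\langle X,Y\rangle=\langle x,y\rangle^d$ exactly. Taking real parts then reduces the problem to the inner product $\Re\langle\cdot,\cdot\rangle$ on the complex space $\HH^{\otimes d}$, and Theorem~\ref{thm:upper} applies to that after passing to the underlying real space. Beyond this remark there is no real obstacle: the argument is a direct linearization.
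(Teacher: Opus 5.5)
Your proof is correct and is essentially the paper's argument: you lift the witnesses to $x_i^{\otimes d},y_i^{\otimes d}$ in the unit ball of $\HH^{\otimes d}$ via Lemma~\ref{lem:tensor} and then apply Theorem~\ref{thm:upper} at the same scale $\eps$. On the complex case, the paper's convention more naturally reads the predicate as $(\Re\langle x,y\rangle)^d$ rather than $\Re(\langle x,y\rangle^d)$, and that reading is handled just as easily by first passing to the underlying real Hilbert space and then tensoring there.
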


\begin{proof}
If $f_d$ has a $(k,\eps)$-half-graph on $\U\times\U$, then by Lemma~\ref{lem:tensor}
the inner product on the unit ball of $\HH^{\otimes d}$ has a $(k,\eps)$-half-graph witnessed by $x_i^{\otimes d},y_i^{\otimes d}$.
Apply Theorem~\ref{thm:upper} in $\HH^{\otimes d}$ to conclude $k\le \exp(C/\eps)$.
\end{proof}

Note that for $f_d$, a native application of Proposition~\ref{prop:key} would only provide a $(k,\eps)$ lower bound with $k\geq \exp(C/\eps^{1/d})$.  To obtain a sharp bound, we will use a shift of basis idea to overcome the ``flatness'' issue as scale $\eps$, and we will combine the integer power case together with the remained case, when the connective $t^\alpha$ satisfies $\alpha\in[1,\infty)$.

In the regime $\alpha>1$, the power map $t\mapsto t^\alpha$ is globally Lipschitz on $[0,1]$
but ``flat'' at $0$.

\begin{lemma}
\label{lem:lipschitz for alpha}
Fix $\alpha\ge 1$.  Then $t\mapsto t^\alpha$ is $\alpha$-Lipschitz on $[0,1]$: for $s,t\in[0,1]$
\[
|t^\alpha-s^\alpha|\le \alpha|t-s|.
\]
\end{lemma}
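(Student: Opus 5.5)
The argument is the mean value theorem applied to $g(u)=u^\alpha$ on $[0,1]$. Since $\alpha\ge 1$, $g$ is continuously differentiable on $[0,1]$ (for $\alpha=1$ it is the identity, and for $\alpha>1$ we have $g'(u)=\alpha u^{\alpha-1}$, which extends continuously to $u=0$ with value $0$). The bound $0\le g'(u)\le \alpha$ on $[0,1]$ then follows from $u^{\alpha-1}\le 1$ for $u\in[0,1]$ and $\alpha-1\ge 0$.

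For $s,t\in[0,1]$, suppose without loss of generality that $s\le t$; the case $s=t$ is trivial. The mean value theorem gives some $\xi\in(s,t)$ with $t^\alpha-s^\alpha=\alpha\xi^{\alpha-1}(t-s)$. Because $\xi\le 1$, the right-hand side is at most $\alpha(t-s)$. Taking absolute values yields the claim. If we want to avoid differentiability at $0$ when $1<\alpha<2$, we can instead write $t^\alpha-s^\alpha=\int_s^t \alpha u^{\alpha-1}\,du$ and bound the integrand by $\alpha$.

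No step here is a real obstacle. The only detail to check is that $u^{\alpha-1}\le 1$ requires $\alpha\ge 1$, and that is exactly where the hypothesis is used.
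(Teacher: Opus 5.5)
Your proof is correct and follows the same argument as the paper: apply the mean value theorem and bound $\alpha\xi^{\alpha-1}\le\alpha$ for $\xi\in[0,1]$. The integral variant is unnecessary, since the mean value theorem only needs continuity on $[s,t]$ and differentiability on the open interval $(s,t)$, but it does no harm.
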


\begin{proof}
By the mean value theorem,
$|t^\alpha-s^\alpha|=\alpha\xi^{\alpha-1}|t-s|\le \alpha|t-s|$ for some $\xi\in[s,t]\subset[0,1]$.
\end{proof}

\begin{proposition}[Upper bound for $(\langle x,y\rangle)_+^\alpha$ with $\alpha\ge 1$]
\label{prop:upper alpha}
Fix $\alpha\ge 1$ and define
\[
f_\alpha(x,y):=\big(\langle x,y\rangle\big)_+^\alpha. 
\]
Then there exists $C_\alpha<\infty$ such that for every $\eps\in(0,1)$ the predicate $f_\alpha$ on $\U\times \U$ is $(k,\eps)$-stable for all
\[
k>\exp\big(C_\alpha/\eps\big).
\]
More concretely, one may take $C_\alpha=\alpha\pi$.
\end{proposition}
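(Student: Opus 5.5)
The plan is to reduce to Theorem~\ref{thm:upper} exactly as in Corollary~\ref{cor:upper beta}, using the Lipschitz bound of Lemma~\ref{lem:lipschitz for alpha} in place of the H\"older bound. Suppose $x_1,\dots,x_k,y_1,\dots,y_k\in\U$ witness a $(k,\eps)$-half-graph for $f_\alpha$, so that for all $i<j$
\[
\big(\langle x_i,y_j\rangle\big)_+^\alpha-\big(\langle x_j,y_i\rangle\big)_+^\alpha\ge \eps .
\]
Set $a=\langle x_i,y_j\rangle_+$ and $b=\langle x_j,y_i\rangle_+$. By Cauchy--Schwarz both lie in $[0,1]$, since all vectors are in $\U$.

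Lemma~\ref{lem:lipschitz for alpha} then gives $\eps\le a^\alpha-b^\alpha\le \alpha(a-b)$, so $a-b\ge \eps/\alpha$. Because $a^\alpha-b^\alpha>0$, we have $a>b\ge 0$, so $a=\langle x_i,y_j\rangle$.

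We need to pass from the positive parts to the inner products themselves. Since $t\le t_+$, we get $\langle x_j,y_i\rangle\le b$. Therefore
\[
\langle x_i,y_j\rangle-\langle x_j,y_i\rangle\ge a-b\ge \eps/\alpha .
\]
So the inner product on $\U$ has a $(k,\eps/\alpha)$-half-graph. This quantity is also less than $1$, which matters if one wants to apply Theorem~\ref{thm:upper} with the parameter in $(0,1)$.

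Applying Theorem~\ref{thm:upper} at scale $\eta=\eps/\alpha$ yields $k\le \exp(\pi/\eta)=\exp(\alpha\pi/\eps)$. Hence $f_\alpha$ is $(k,\eps)$-stable for all $k>\exp(\alpha\pi/\eps)$, which gives $C_\alpha=\alpha\pi$.

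The only step needing care is the positive-part bookkeeping. Lemma~\ref{lem:lipschitz for alpha} applies only to arguments in $[0,1]$, so it has to be applied to the truncated values $a,b$ and not to the raw inner products. The conversion back to raw inner products then relies on the monotonicity of $t\mapsto t_+$ together with $a>0$. Beyond this, the argument is a direct reduction.
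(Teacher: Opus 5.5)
Your proposal is correct and follows the paper's own proof. Both use monotonicity and the $\alpha$-Lipschitz bound for $t\mapsto t_+^\alpha$ to turn a $(k,\eps)$-half-graph for $f_\alpha$ into a $(k,\eps/\alpha)$-half-graph for the inner product, then apply Theorem~\ref{thm:upper} at scale $\eps/\alpha$; your positive-part bookkeeping just spells out what the paper condenses into ``increasing and $\alpha$-Lipschitz on $[-1,1]$''.
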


\begin{proof}
Assume $f_\alpha$ has a $(k,\eps)$-half-graph witnessed by $x_1,\dots,x_k,y_1,\dots,y_k\in\U$, so for all $i<j$,
\[
(\langle x_i,y_j\rangle)_+^\alpha-(\langle x_j,y_i\rangle)_+^\alpha \ge \eps.
\]
Since $t\mapsto (t)_+^\alpha$ is increasing and $\alpha$-Lipschitz on $[-1,1]$ (by Lemma~\ref{lem:lipschitz for alpha} on $[0,1]$),
we have
\[
\langle x_i,y_j\rangle-\langle x_j,y_i\rangle \ge \frac{\eps}{\alpha}
\]
whenever $i<j$. 
Thus the inner product itself has a $(k,\eps/\alpha)$-half-graph.  Applying Theorem~\ref{thm:upper} at scale
$\eta=\eps/\alpha$ yields $k\le \exp(C/\eta)=\exp((\alpha C)/\eps)$.
\end{proof}

We now consider lower bounds. Similar as the integer power case, a direct application of Proposition~\ref{prop:key} would not provide a sharp bound.

\begin{corollary}
\label{cor:lower alpha}
Fix $\alpha>1$ and let $f_\alpha(x,y)=(\langle x,y\rangle)_+^\alpha$.
Then for every integer $m\ge 1$, $f_\alpha$ is not $(2^m,m^{-\alpha})$-stable. Equivalently, for every $\eps\in(0,1)$,
setting $m=  \eps^{-1/\alpha} $ yields a $(k,\eps)$-half-graph with
\[
k=2^m \ge \exp(c\eps^{-1/\alpha})
\]
for an absolute constant $c>0$.
\end{corollary}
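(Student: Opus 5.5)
This is a direct transfer of the tree construction of Section~\ref{sec:lower}, exactly as in the proof of Corollary~\ref{cor:lower beta}. Fix an integer $m\ge 1$ and take the Hilbert space $\HH=\ell_2(\mathsf E)$ and the vectors $\{x_s,y_s\}_{s\in\Sigma_m}$ defined in \eqref{eq:def-xs} and \eqref{eq:def-ys}. By Lemma~\ref{lem:unit} all of them lie in $\U$. Since $\U$ for a general Hilbert space of sufficiently large dimension contains an isometric copy of this configuration, it suffices to work in $\ell_2(\mathsf E)$.

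The key input is the proof of Proposition~\ref{prop:key}. It does more than compute the difference: for $s\prec t$ it establishes the exact values $\langle x_s,y_t\rangle=1/m$ (Claim~1) and $\langle x_t,y_s\rangle=0$ (Claim~2). Both values lie in $[0,1]$, so the positive part does nothing, and
\[
f_\alpha(x_s,y_t)-f_\alpha(x_t,y_s)=\Big(\frac1m\Big)^\alpha-0^\alpha=m^{-\alpha}.
\]
Enumerating $\Sigma_m$ in increasing lexicographic order as $s^{(1)}\prec\cdots\prec s^{(2^m)}$ and setting $x_i:=x_{s^{(i)}}$, $y_i:=y_{s^{(i)}}$ then gives, for all $i<j$, the half-graph inequality with gap $m^{-\alpha}$. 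Hence $f_\alpha$ is not $(2^m,m^{-\alpha})$-stable.

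For the reformulation in terms of $\eps$, take $m:=\eps^{-1/\alpha}$, ignoring floors as the paper permits; with a floor, $m=\lfloor\eps^{-1/\alpha}\rfloor\ge 1$ still gives $m^{-\alpha}\ge\eps$. Since a half-graph with gap at least $\eps$ is in particular a $(k,\eps)$-half-graph, we obtain one with
\[
k=2^m\ge \exp\big(c\,\eps^{-1/\alpha}\big)
\]
for an absolute constant $c$ (for instance $c=\log 2$ up to the floor loss, and $c=\tfrac12\log 2$ in all cases since $\lfloor u\rfloor\ge u/2$ for $u\ge1$).

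No step here is a genuine obstacle. The only point needing care is to use the exact values from Claims~1 and~2 rather than only the difference stated in Proposition~\ref{prop:key}, since $t\mapsto t^\alpha$ is not translation invariant. Because the small inner product is exactly $0$, the flatness of $t^\alpha$ at $0$ costs a full factor $m^{-\alpha}$. This is why the bound is not sharp compared with Proposition~\ref{prop:upper alpha}, but it does not affect the correctness of the corollary.
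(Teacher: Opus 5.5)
Your proposal is correct and matches the paper's proof: apply the tree construction behind Proposition~\ref{prop:key}, use the exact values $\langle x_s,y_t\rangle=1/m$ and $\langle x_t,y_s\rangle=0$, and raise the forward value to the power $\alpha$. Your explicit handling of the floor, giving $c=\tfrac12\log 2$, is a harmless refinement of what the paper leaves implicit.
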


\begin{proof}
Apply Proposition~\ref{prop:key} and raise the forward value $1/m$ to the power $\alpha$.
\end{proof}

To overcome this issue, we need to shift the localized pattern away from $0$ by adding a common component in an orthogonal direction.
This removes the flattening issue and restores an $\exp(\Omega(1/\eps))$ lower bound, matching the upper bound
in Proposition~\ref{prop:upper alpha} up to constants.

\begin{proposition}[Shifted lower bound]
\label{prop:shift lower}
Fix $\alpha>1$.  There exists a constant $c_\alpha>0$ such that for every $\eps\in(0,1)$,
the predicate $f_\alpha(x,y)=(\langle x,y\rangle)_+^\alpha$ is not $(k,\eps)$-stable for
\[
k \le \exp\big(c_\alpha/\eps\big).
\]
\end{proposition}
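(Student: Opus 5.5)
We shift the tree construction of Section~\ref{sec:lower} away from $0$. Fix $m=c'/\eps$ with $c'$ depending on $\alpha$. Let $\{x_s,y_s\}_{s\in\Sigma_m}\subseteq\ell_2(\mathsf E)$ be the vectors from \eqref{eq:def-xs} and \eqref{eq:def-ys}, and adjoin a unit vector $e_0$ orthogonal to $\ell_2(\mathsf E)$. We set
\[
\tilde x_s:=\tfrac{1}{\sqrt2}\big(e_0+x_s\big),\qquad \tilde y_s:=\tfrac{1}{\sqrt2}\big(e_0+y_s\big).
\]
Lemma~\ref{lem:unit} gives $\|x_s\|,\|y_s\|\le1$, so $\|\tilde x_s\|^2\le 1$ and $\|\tilde y_s\|^2\le1$. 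Thus all these vectors lie in $\U$.

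We then compute
\[
\langle\tilde x_s,\tilde y_t\rangle=\tfrac12\big(1+\langle x_s,y_t\rangle\big).
\]
For $s\prec t$, Proposition~\ref{prop:key} (more precisely its two claims) gives $\langle x_s,y_t\rangle=1/m$ and $\langle x_t,y_s\rangle=0$. Hence
\[
\langle\tilde x_s,\tilde y_t\rangle=\tfrac12+\tfrac1{2m},\qquad\langle\tilde x_t,\tilde y_s\rangle=\tfrac12 .
\]
Both values are positive, so the truncation $(\cdot)_+$ does nothing. Therefore
\[
f_\alpha(\tilde x_s,\tilde y_t)-f_\alpha(\tilde x_t,\tilde y_s)=\Big(\tfrac12+\tfrac1{2m}\Big)^\alpha-\Big(\tfrac12\Big)^\alpha\ \ge\ \alpha\Big(\tfrac12\Big)^{\alpha-1}\cdot\tfrac1{2m}=\frac{\alpha}{2^\alpha m}.
\]
The inequality is the mean value theorem, using that $t\mapsto t^\alpha$ has derivative at least $\alpha(1/2)^{\alpha-1}$ on $[1/2,1]$ because $\alpha\ge1$. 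This removes the flatness at $0$ that spoiled Corollary~\ref{cor:lower alpha}: near $t=1/2$ the connective has slope bounded below by a constant depending only on $\alpha$.

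Enumerating $\Sigma_m$ lexicographically as in the proof of Theorem~\ref{thm:lower} produces a $(2^m,\alpha 2^{-\alpha}/m)$-half-graph for $f_\alpha$. Choosing $m=\alpha 2^{-\alpha}/\eps$ makes the gap at least $\eps$, and gives
\[
k=2^m=\exp\big(\log 2\cdot\alpha2^{-\alpha}/\eps\big).
\]
So we may take $c_\alpha=\alpha2^{-\alpha}\log2$. Since a $(k,\eps)$-half-graph restricts to a $(k',\eps)$-half-graph for every $k'\le k$, $f_\alpha$ is not $(k,\eps)$-stable for any $k\le\exp(c_\alpha/\eps)$. If $m$ must be an integer, we take $m=\lfloor\alpha2^{-\alpha}/\eps\rfloor$, which is at least $1$ for small $\eps$. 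The remaining range of $\eps$, bounded away from $0$, is covered by shrinking $c_\alpha$ so that $\exp(c_\alpha/\eps)<2$ there, which makes the claim trivial.

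The argument is essentially direct. The only step needing care is checking that the cross terms vanish under the shift: $e_0$ is orthogonal to every $x_s$ and $y_t$, so $\langle\tilde x_s,\tilde y_t\rangle$ is exactly $\frac12$ plus half the original inner product. The same construction also handles part~(3) of Theorem~\ref{thm:holder conn} for $t^d$, with the same slope bound near $1/2$.
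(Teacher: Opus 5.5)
Your proposal is correct and is essentially the paper's proof: the paper sets $x_s=\sin\theta\, e_0+\cos\theta\, x'_s$ and $y_s=\sin\theta\, e_0+\cos\theta\, y'_s$ with $\theta=\pi/4$, which is exactly your $\tfrac{1}{\sqrt2}(e_0+x_s)$ and $\tfrac1{\sqrt2}(e_0+y_s)$, and it obtains the same gap $\alpha 2^{-\alpha}/m$ via the mean value theorem. One small bookkeeping point: with $m=\lfloor \alpha2^{-\alpha}/\eps\rfloor$ you only get $k\ge 2^{\alpha 2^{-\alpha}/\eps-1}$, so you should take, e.g., $c_\alpha=\tfrac12\alpha2^{-\alpha}\log 2$ for all $\eps$, rather than adjusting $c_\alpha$ only in the range where $m=0$ (the paper absorbs this into an $O(1)$ error in the exponent).
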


\begin{proof}
Fix an integer $m\ge 1$.  Let $\HH':=\ell_2(\mathsf E)$ and vectors $x'_s,y'_s\in \HH'$ be as in
Proposition~\ref{prop:key}, so that for $s<t$,
\[
\langle x'_s,y'_t\rangle=\frac{1}{m}\qquad\text{and}\qquad \langle x'_t,y'_s\rangle=0,
\qquad
\|x'_s\|=1,\|y'_s\|\le 1.
\]

Let $e_0$ be a unit vector orthogonal to $\HH'$, and set $\HH:=\mathrm{span}\{e_0\}\oplus \HH'$.
Fix $\theta=\pi/4$, and define for each $s\in\Sigma_m$:
\[
x_s:=\sin(\theta) e_0+\cos(\theta)x'_s,
\qquad
y_s:=\sin(\theta) e_0+\cos(\theta)y'_s.
\]
Then $\|x_s\|^2=\sin(\theta)^2+\cos(\theta)^2\|x'_s\|^2=1$ and
$\|y_s\|^2=\sin(\theta)^2+\cos(\theta)^2\|y'_s\|^2\le 1$, hence $x_s,y_s\in\U\subseteq\HH$.

For $s<t$, orthogonality of $e_0$ and $\HH'$ gives
\begin{align*}
&\langle x_s,y_t\rangle
=\sin(\theta)^2+\cos(\theta)^2\langle x'_s,y'_t\rangle
=\sin(\theta)^2+\frac{\cos(\theta)^2}{m},\\
&\langle x_t,y_s\rangle
=\sin(\theta)^2+\cos(\theta)^2\langle x'_t,y'_s\rangle
=\sin(\theta)^2.
\end{align*}
Since $\alpha>1$, the function $t\mapsto t^\alpha$ is convex and increasing on $[0,1]$, so by the mean value theorem
and monotonicity of the derivative,
\[
\big(\sin(\theta)^2+\tfrac{\cos(\theta)^2}{m}\big)^\alpha-(\sin(\theta)^2)^\alpha
\ge
\alpha(\sin(\theta)^2)^{\alpha-1}\cdot \frac{\cos(\theta)^2}{m}
=: \frac{c_{\alpha}}{m},
\]
where $c_{\alpha}=\alpha/2^{\alpha}>0$ depends only on $\alpha$.
Thus for $s<t$ we obtain
\[
f_\alpha(x_s,y_t)-f_\alpha(x_t,y_s)\ge\frac{c_{\alpha}}{m}.
\]
Enumerating $\Sigma_m$ in lexicographic order yields a $(2^m,c_{\alpha}/m)$-half-graph for $f_\alpha$.
Now given $\eps\in(0,1)$, choose $m=  c_{\alpha}/\eps $ so that $c_{\alpha}/m\ge\eps$.
Then
\[
k=2^m \ge \exp\big((\log 2)c_{\alpha}/\eps + O(1)\big),
\]
as desired.
\end{proof}

Using the same proof by replacing $\alpha$ by $d$ we obtain the sharp lower bound for integer power connective. 

\begin{corollary}[Exponential rate for integer powers]
Fix $d\in\mathbb Z^{>0}$.  Then $\langle x,y\rangle^d$ on $\U\times\U$ is not $(k,\eps)$-stable for $k\leq  \exp(c_d/\eps))$ where $c_d>0$ depending only on $d$. 
\end{corollary}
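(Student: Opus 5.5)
We mimic the proof of Proposition~\ref{prop:shift lower}, replacing $\alpha$ by the integer $d$ and removing the positive-part truncation. Fix $m\ge 1$ and take the tree vectors $x'_s,y'_s\in\HH'=\ell_2(\mathsf E)$ from Section~\ref{sec:lower}. By Lemma~\ref{lem:unit} and Proposition~\ref{prop:key} they satisfy $\|x'_s\|=1$, $\|y'_s\|\le1$, and, for $s\prec t$,
\[
\langle x'_s,y'_t\rangle=\tfrac1m,\qquad \langle x'_t,y'_s\rangle=0.
\]
Adjoin a unit vector $e_0\perp\HH'$ and set
\[
x_s=\tfrac{1}{\sqrt2}e_0+\tfrac1{\sqrt2}x'_s,\qquad y_s=\tfrac1{\sqrt2}e_0+\tfrac1{\sqrt2}y'_s
\]
(the choice $\theta=\pi/4$). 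As in Proposition~\ref{prop:shift lower}, these vectors lie in $\U$. Moreover, for $s\prec t$,
\[
\langle x_s,y_t\rangle=\tfrac12+\tfrac1{2m},\qquad \langle x_t,y_s\rangle=\tfrac12 .
\]

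Both inner products lie in $[0,1]$, so $\langle x,y\rangle^d$ coincides with $(\langle x,y\rangle_+)^d$ on every pair we use. The sign issue particular to odd or even $d$ therefore never arises. For $d=1$ the claim is already Theorem~\ref{thm:lower}. For $d\ge2$ the map $t\mapsto t^d$ is convex and increasing on $[0,1]$, and the mean value theorem gives
\[
\Big(\tfrac12+\tfrac1{2m}\Big)^d-\Big(\tfrac12\Big)^d\ \ge\ d\Big(\tfrac12\Big)^{d-1}\cdot\tfrac1{2m}=\frac{d\,2^{-d}}{m}=:\frac{c'_d}{m}.
\]
The same bound also follows directly from the binomial expansion, since its first-order term is exactly $c'_d/m$ and every other term is nonnegative.

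Enumerating $\Sigma_m$ lexicographically, as in Theorem~\ref{thm:lower}, yields a $(2^m,c'_d/m)$-half-graph for $f_d$. Given $\eps\in(0,1)$, choose $m=\lfloor c'_d/\eps\rfloor$; when $\eps\ge c'_d$ this gives $m=0$ and $k=1$, which is trivial. Then $c'_d/m\ge\eps$, and we obtain $k=2^m\ge\exp(c_d/\eps)$ with $c_d$ a constant multiple of $(\log2)\,c'_d$, after absorbing the floor. Since no half-graph with more vertices is needed, $f_d$ is not $(k,\eps)$-stable for every $k\le\exp(c_d/\eps)$.

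No step is a genuine obstacle. The only point needing care is that the shift keeps all relevant inner products nonnegative, which is why the positive part in Proposition~\ref{prop:shift lower} can be dropped for odd $d$.
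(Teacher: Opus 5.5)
Your proposal is correct and is essentially the paper's proof: the paper reruns the shifted $\theta=\pi/4$ construction of Proposition~\ref{prop:shift lower} with $\alpha$ replaced by $d$, which gives the same constant $d2^{-d}/m$ that you obtain. Your remark that every relevant inner product lies in $[0,1]$, so that $\langle x,y\rangle^d=(\langle x,y\rangle_+)^d$ on the pairs used, spells out a point the paper leaves implicit.
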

%%%%%%%%%%%%%%%%%%%%

We now record a general consequence of the tensor-power argument via polynomial approximations. It shows that no monotonicity assumption is needed for an upper bound, provided the bound is weaker in some cases as $f_\beta(x,y)$. 

Let $g\in C([-1,1])$. For $\eta>0$, define
\[
\mathcal A_g(\eta)
:=
\inf
\left\{
    \sum_{d=1}^D |a_d| :
    p(t)=a_0+\sum_{d=1}^D a_d t^d,\ 
    \|g-p\|_{L^\infty([-1,1])}\leq \eta
\right\}.
\]
By the Weierstrass approximation theorem, $\mathcal A_g(\eta)<\infty$
for every $\eta>0$.

\begin{proposition}[Continuous connectives]
Let $g\in C([-1,1])$, and define
$
f_g(x,y):=g(\langle x,y\rangle)
$
on $\U\times \U$. Then, for every $\varepsilon\in(0,1)$,  $f_g$ is $(k,\varepsilon)$-stable for all
\[
k>
\exp\left(
    \frac{2\pi\,\mathcal A_g(\varepsilon/4)}{\varepsilon}
\right).
\]
\end{proposition}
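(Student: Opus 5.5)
Let $p(t)=a_0+\sum_{d=1}^D a_d t^d$ be a polynomial with $\|g-p\|_{L^\infty([-1,1])}\le \eps/4$ and $A:=\sum_{d\ge1}|a_d|\le \mathcal A_g(\eps/4)+\delta$, with $\delta>0$ small. The plan is to pass from $f_g$ to $p(\langle x,y\rangle)$, linearize $p$ by the tensor-power trick (Lemma~\ref{lem:tensor}), and feed the result into the Hilbert-transform argument of Theorem~\ref{thm:upper}. Suppose $x_1,\dots,x_k,y_1,\dots,y_k\in\U$ form a $(k,\eps)$-half-graph for $f_g$. Since $\langle x,y\rangle\in[-1,1]$ on $\U\times\U$, for $i<j$ we get
\[
p(\langle x_i,y_j\rangle)-p(\langle x_j,y_i\rangle)\ge \eps-2\cdot\tfrac{\eps}{4}=\tfrac{\eps}{2}.
\]
The constant $a_0$ cancels in this difference, which is why only $\sum_{d\ge1}|a_d|$ enters.

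Next, linearize. Set $\widetilde\HH:=\bigoplus_{d=1}^D \HH^{\otimes d}$ and
\[
X_i:=\bigoplus_{d=1}^D \sqrt{|a_d|/A}\; x_i^{\otimes d},\qquad Y_j:=\bigoplus_{d=1}^D \operatorname{sgn}(a_d)\sqrt{|a_d|/A}\; y_j^{\otimes d}.
\]
By Lemma~\ref{lem:tensor}, $\langle X_i,Y_j\rangle=A^{-1}\sum_{d\ge1}a_d\langle x_i,y_j\rangle^d=A^{-1}(p(\langle x_i,y_j\rangle)-a_0)$. Also $\|X_i\|^2\le\sum_d |a_d|/A=1$, and likewise $\|Y_j\|\le1$, since $\|x^{\otimes d}\|=\|x\|^d\le1$. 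In the real case $\operatorname{sgn}(a_d)$ is just a sign; in the complex case we pass to $\Re$ as in the paper's conventions. Combining with the previous display gives, for $i<j$,
\[
\langle X_i,Y_j\rangle-\langle X_j,Y_i\rangle\ge \frac{\eps}{2A}.
\]
So the inner product on the unit ball of $\widetilde\HH$ has a $(k,\eps/(2A))$-half-graph.

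Apply Theorem~\ref{thm:upper} in $\widetilde\HH$. Note the orientation: its proof uses $i>j$, but relabelling $i\mapsto k+1-i$ together with swapping the roles of the two families handles this, or one can simply rerun the proof with the sign of $a_{ij}$ flipped. This yields $k\le \exp(\pi\cdot 2A/\eps)$. Letting $\delta\to0$, any $k>\exp(2\pi\mathcal A_g(\eps/4)/\eps)$ exceeds $\exp(2\pi(\mathcal A_g(\eps/4)+\delta)/\eps)$ for some small $\delta$, which contradicts the bound. Hence $f_g$ is $(k,\eps)$-stable in that range.

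The main point requiring care is the normalization in the direct sum. Splitting the weight as $\sqrt{|a_d|}$ on each side, rather than putting $a_d$ entirely on one side, is what keeps both $X_i$ and $Y_j$ in the unit ball with total cost exactly $A$ rather than $A^2$ or $\sum_d |a_d|^2$. Everything else is bookkeeping, namely the $\eps/4$ slack on each side and the infimum handled by $\delta$.
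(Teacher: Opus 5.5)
Your proposal is correct and is essentially the paper's proof: approximate $g$ by $p$ to within $\eps/4$ so that the $p$-gap is at least $\eps/2$, linearize in $\bigoplus_d\HH^{\otimes d}$ with the weight $|a_d|^{1/2}$ split between $X$ and $Y$, normalize by $A^{1/2}$, and apply Theorem~\ref{thm:upper} at scale $\eps/(2A)$. Two small points to tidy. First, the case $A=0$, where your normalization divides by zero, should be dismissed separately as the paper does: then $p$ is constant, so no pair $i<j$ can have $p$-gap $\eps/2$. Second, in your orientation remark, relabelling $i\mapsto k+1-i$ \emph{or} swapping the two families fixes the direction, but doing both restores the original orientation; in any case Theorem~\ref{thm:upper} is stated for Definition~\ref{def:stability}, so no fix is needed.
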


\begin{proof}
Choose a polynomial
$
p(t)=a_0+\sum_{d=1}^D a_d t^d
$
such that
\[
\|g-p\|_{L^\infty([-1,1])}\leq \eps/4. 
\]
Hence
$A:=\sum_{d=1}^D |a_d| \leq \mathcal A_g(\varepsilon/4)+o(1).
$
If $A=0$, then $p$ is constant. In that case $g$ is within
$\varepsilon/4$ of a constant, and so $f_g$ cannot have an
$(k,\varepsilon)$-half-graph for any $k$. Thus we may assume
$A>0$.

Suppose that $f_g$ has a $(k,\varepsilon)$-half-graph, witnessed by $x_1,\ldots,x_k,y_1,\ldots,y_k\in \U$. That is for every $i<j$,
$
g(\langle x_i,y_j\rangle)-g(\langle x_j,y_i\rangle)\geq \varepsilon.
$
Since $p$ approximates $g$ within $\varepsilon/4$, it follows that
\[
p(\langle x_i,y_j\rangle)-p(\langle x_j,y_i\rangle)
\geq
\varepsilon/2.
\]

Now form the Hilbert space
\[
\mathcal{G}=\bigoplus_{\substack{1\leq d\leq D\\ a_d\neq0}} \HH^{\otimes d}.
\]
For $x,y\in \U$, define
\[
X(x)=
\bigoplus_{a_d\neq0} |a_d|^{1/2}x^{\otimes d},\qquad Y(y)=
\bigoplus_{a_d\neq0} \operatorname{sgn}(a_d)|a_d|^{1/2}y^{\otimes d}.
\]
Under this notation we have
\[
\langle X(x),Y(y)\rangle_\mathcal{G}
=
\sum_{d=1}^D a_d\langle x,y\rangle^d
=
p(\langle x,y\rangle)-a_0.
\]
Moreover, as $\|X(x)\|^2\leq A,$ and $\|Y(y)\|^2\leq A.$ Therefore, for every $i<j$,
\[
\langle X(x_i),Y(y_j)\rangle\mathcal{G} - \langle X(x_j),Y(y_i)\rangle\mathcal{G} \geq \varepsilon/2.
\]
After normalizing, we obtain a $(k,\varepsilon/(2A))$-half-graph for the inner product on the unit ball of $\mathcal{G}$. By Theorem~\ref{thm:upper}, we have $k
\leq
\exp(\frac{2\pi A}{\varepsilon}).$
Taking the infimum over admissible polynomial approximants $p$, and then letting the $o(1)$ error tend to zero, gives the claim.
\end{proof}

\appendix
 \section{The continuous VC-dimension}\label{Sec: VC}

Let us consider the following definition of the continuous version of $VC$-dimension. The definition is the same as the one defined earlier by Talagrand~\cite{Talagrand87,Talagrand96}, and by Chernikov and Towsner when $k=1$ \cite{ChernikovTowsner}. 

\begin{definition}[VC-graph at margin $\eps$]
\label{def:vc graph}
Let $f:X\times Y\to\RR$ be a bounded function and fix $\eps>0$.
We say that $f$ contains a \emph{$(d,\eps)$-VC graph} if there exist points
$x_1,\dots,x_d\in X$, a threshold $s\in\RR$, and points $\{y_S: S\subseteq[d]\}\subseteq Y$
such that for every $S\subseteq[d]$ and every $i\in[d]$,
\[
\begin{cases}
f(x_i,y_S) \ge s & \text{if } i\in S,\\[2pt]
f(x_i,y_S) \le s-\eps & \text{if } i\notin S.
\end{cases}
\]
We write $\VC_\eps(f)$ for the largest $d$ such that $f$ contains a $(d,\eps)$-VC graph. 
\end{definition}

We record below the following proposition, determining the exact VC-dimension of the inner product over the unit ball in Hilbert spaces. The proof are relatively straightforward and different from the proofs for the stability of inner product. 

\begin{proposition}[Continuous VC-dimension of the Hilbert inner product]
\label{prop:vc inner product}
Let $\HH$ be a real  Hilbert space, let $\U$ be its unit ball, and let $f(x,y)=\Re\langle x,y\rangle$ on $\U\times\U$. Then for every $\eps\in(0,1)$,
\[
\VC_\eps(f)= \min\Big\{\dim(\HH),   \frac{4}{\eps^{2}} \Big\}.
\]
\end{proposition}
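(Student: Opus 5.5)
We prove the two inequalities $\VC_\eps(f)\le\min\{\dim\HH,4/\eps^2\}$ and $\VC_\eps(f)\ge\min\{\dim\HH,4/\eps^2\}$ separately. We read $4/\eps^2$ up to the paper's floor convention, i.e.\ as $\lfloor 4/\eps^2\rfloor$.

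\emph{Upper bound via dimension.} Suppose $x_1,\dots,x_d$, a threshold $s$ and the vectors $y_S$ witness a $(d,\eps)$-VC graph. The functions $y\mapsto\langle x_i,y\rangle-s$ are affine on $\HH$, so the family they induce can shatter at most $\dim(\HH)+1$ points. We want $\dim\HH$ rather than $\dim\HH+1$. For this we use the threshold structure: positivity on $S$ and negativity (with margin) off $S$ must hold simultaneously for $S$ and for its complement $[d]\setminus S$. Equivalently, since $y$ ranges over the ball, the vectors $x_i$ must be affinely shattered by halfspaces whose offset $s$ is common to all of them. If this refinement fails, the dimension bound only holds up to $+1$, and we would check whether the statement intends that.

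\emph{Upper bound via the margin.} Fix $S$ and let $\sigma_i=+1$ if $i\in S$ and $\sigma_i=-1$ otherwise. Compare the test vectors $y_S$ and $y_{[d]\setminus S}$. For every $i$ the definition gives $\sigma_i\langle x_i,\,y_S-y_{[d]\setminus S}\rangle\ge\eps$. Summing over $i$,
\[
d\eps\le\Big\langle \sum_i\sigma_i x_i,\; y_S-y_{[d]\setminus S}\Big\rangle\le 2\Big\|\sum_i\sigma_i x_i\Big\|,
\]
since $\|y_S-y_{[d]\setminus S}\|\le2$. Now average over all $S$, i.e.\ over uniformly random signs $\sigma$. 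By orthogonality of independent Rademacher variables, $\mathbb E\|\sum\sigma_ix_i\|^2=\sum\|x_i\|^2\le d$, so some choice of signs has $\|\sum\sigma_i x_i\|\le\sqrt d$. Applying the displayed inequality to that $S$ gives $d\eps\le 2\sqrt d$, that is, $d\le 4/\eps^2$.

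\emph{Lower bound.} Let $d=\min\{\dim\HH,\lfloor4/\eps^2\rfloor\}$, take orthonormal $e_1,\dots,e_d$, and set $x_i=e_i$. For each $S$ let
\[
y_S=\frac{1}{\sqrt d}\sum_i\sigma_i e_i,
\]
which is a unit vector. Then $\langle x_i,y_S\rangle=\pm 1/\sqrt d$ according as $i\in S$ or not. With threshold $s=1/\sqrt d$ the two values are separated by $2/\sqrt d\ge\eps$, since $d\le4/\eps^2$. So this is a $(d,\eps)$-VC graph, and the lower bound holds.

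\emph{Main obstacle.} The margin bound and the construction are routine. The step that needs care is the dimension part of the upper bound: the plain halfspace argument gives $\dim\HH+1$, and removing the $+1$ requires the common-threshold refinement described above. Exact integer rounding of $4/\eps^2$ also needs attention.
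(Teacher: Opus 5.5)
Your margin bound and your construction are correct and are essentially the paper's arguments. The paper gets $\|\sum_i\sigma_ix_i\|\ge d\eps/2$ by adding the Cauchy--Schwarz inequalities for $\sigma$ and $-\sigma$; your difference $y_S-y_{[d]\setminus S}$ does the same in one step. Your lower bound is the paper's construction rescaled.

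The genuine gap is the bound $\VC_\eps(f)\le\dim\HH$, which is needed whenever $\dim\HH<4/\eps^2$. You only invoke the standard $\dim\HH+1$ bound for affine halfspaces. You then describe a ``common-threshold refinement'' that you never carry out, and you even allow that it might fail. As written, the proposal establishes only $\VC_\eps(f)\le\min\{\dim\HH+1,4/\eps^2\}$.

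The paper closes this with a cell count. Project the $y_S$ onto $V=\mathrm{span}\{x_1,\dots,x_d\}$, with $r=\dim V\le\dim\HH$, and consider the affine hyperplanes $H_i=\{y\in V:\langle x_i,y\rangle=s-\eps/2\}$. Every $y_S$ lies strictly off all $H_i$, and distinct $S$ give distinct sign patterns, so the arrangement must have at least $2^d$ cells. Zaslavsky's theorem gives at most $\sum_{j=0}^r\binom dj<2^d$ cells once $d>r$, a contradiction.

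Alternatively, your own key inequality finishes the job in one line, and this is what your ``$S$ and its complement'' remark was reaching for. The vectors $w_S:=y_S-y_{[d]\setminus S}$ satisfy $\sigma_i\langle x_i,w_S\rangle\ge\eps$, so the threshold $s$ has disappeared and you are shattering by homogeneous halfspaces. If $d>\dim\HH$, pick a nontrivial dependence $\sum_i\lambda_ix_i=0$ and set $S=\{i:\lambda_i>0\}$. Then $\lambda_i\langle x_i,w_S\rangle\ge\eps|\lambda_i|$ for every $i$, so $0=\langle\sum_i\lambda_ix_i,w_S\rangle\ge\eps\sum_i|\lambda_i|>0$, a contradiction.
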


\begin{proof}
Let us first consider the lower bound construction. 
Let $d\le 4/\eps^{2}$ and choose orthonormal vectors $e_1,\dots,e_d\in\HH$. Set $x_i=e_i\in\U$ and choose the threshold $s=\eps/2$. For each $S\subseteq[d]$, define a sign vector $s^{(S)}\in\{\pm1\}^d$ by
\[
s^{(S)}_i=\begin{cases}
+1 & i\in S,\\
-1 & i\notin S,
\end{cases}
\qquad\text{and set}\qquad
y_S=\frac{\eps}{2}\sum_{i=1}^d s^{(S)}_ie_i.
\]
Then $\|y_S\|=\eps d^{1/2}/2\le 1$, so $y_S\in\U$, and for each $i$,
\[
f(x_i,y_S)=\langle e_i,y_S\rangle=\frac{\eps s^{(S)}_i}{2}.
\]
Hence $f(x_i,y_S)=\eps/2$ if $i\in S$, while $f(x_i,y_S)=-\eps/2$ if $i\notin S$.
Thus $f$ contains a $(d,\eps)$-VC graph, so $\VC_\eps(f)\ge   4/\eps^{2} $. 

Also from the construction,  $\VC_\eps(f)\ge \dim(\HH)$ when $\dim(\HH)<  4/\eps^{2} $. 
\medskip

Next, let us handle the upper bound. 
Assume $f$ contains a $(d,\eps)$-VC graph witnessed by $x_1,\dots,x_d\in\U$, a threshold $s\in\RR$, and points $\{y_S\}_{S\subseteq[d]}\subseteq\U$.
For each sign vector $\sigma\in\{\pm1\}^d$, let $S(\sigma)=\{i:\sigma_i=1\}$ and write $y_\sigma=y_{S(\sigma)}$.
Then for every $i$,
\[
\sigma_i\big(f(x_i,y_\sigma)-s+\eps/2\big) \ge \eps/2.
\]
where $\sigma_i\in\{\pm1\}$ is the value of $\sigma$ at the $i$-th coordinate. 
Summing over $i$ gives
\[
\sum_{i=1}^d \sigma_i f(x_i,y_\sigma) - s\sum_{i=1}^d\sigma_i + \frac{\eps}{2}\sum_{i=1}^d\sigma_i
 \ge \frac{d\eps}{2}.
\]
Write $m(\sigma)=\sum_{i=1}^d\sigma_i$.  Since $f(x_i,y_\sigma)=\langle x_i,y_\sigma\rangle$ and $\|y_\sigma\|\le 1$,
Cauchy--Schwarz yields
\[
\sum_{i=1}^d \sigma_i f(x_i,y_\sigma)
=\Big\langle \sum_{i=1}^d \sigma_i x_i, y_\sigma\Big\rangle
\le \Big\|\sum_{i=1}^d \sigma_i x_i\Big\|.
\]
Thus for every $\sigma$,
\begin{equation}\label{eq:key-vc-upper}
\Big\|\sum_{i=1}^d \sigma_i x_i\Big\|
 \ge \frac{d\eps}{2} + (s-\eps/2) m(\sigma).
\end{equation}

Now we apply \eqref{eq:key-vc-upper} to both $\sigma$ and $-\sigma$ and add the two inequalities.
Since $m(-\sigma)=-m(\sigma)$, the $m(\sigma)$-terms cancel each other and we obtain
\[
\Big\|\sum_{i=1}^d \sigma_i x_i\Big\|+\Big\|\sum_{i=1}^d (-\sigma_i) x_i\Big\|
\ge d\eps.
\]
Observe that the two norms on the left are equal, so for every  $\sigma\in\{\pm1\}^d$, 
\[
\Big\|\sum_{i=1}^d \sigma_i x_i\Big\| \ge \frac{d\eps}{2}.
\]
Squaring and averaging over uniform random $\sigma$ gives
\[
\frac{d^2\eps^2}{4}
 \le \mathbb{E} \Big\|\sum_{i=1}^d \sigma_i x_i\Big\|^2
 = \sum_{i,j=1}^d\mathbb{E}[\sigma_i\sigma_j] \langle x_i,x_j\rangle= \sum_{i=1}^d \|x_i\|^2
 \le d,
\]
where the equality uses $\mathbb{E}[\sigma_i\sigma_j]=\mathbb{E}[\sigma_i]\mathbb{E}[\sigma_j]=0$ for $i\neq j$.
Hence $d\le 4\eps^{-2}$.

It remains to consider the case when $4\eps^{-2}<\dim(\HH)$, and this is done by Lemma~\ref{lem: bounded dim} below.  
\end{proof}

\begin{lemma}
\label{lem: bounded dim}
Let $\HH$ be a real Hilbert space of dimension $n$, let $\U$ be its unit ball, and let
$
f(x,y)=\langle x,y\rangle
$ on $\U\times \U$. 
Then for every $\eps\in(0,1)$,
\[
\VC_\eps(f)\le n.
\]
\end{lemma}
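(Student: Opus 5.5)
We want to show that if $\HH$ has dimension $n$, then the inner product on its unit ball admits no $(d,\eps)$-VC graph with $d>n$. The plan is to reduce to the classical fact that affine half-spaces (homogeneous linear threshold functions with an offset) in $\RR^n$ have VC-dimension $n+1$. We then sharpen this by one, using the margin structure and the symmetry trick already used in the proof of Proposition~\ref{prop:vc inner product}.

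\textbf{Setting up the contradiction.} Suppose $x_1,\dots,x_d\in\U$, a threshold $s$, and vectors $y_S\in\U$ witness a $(d,\eps)$-VC graph with $d\ge n+1$. Since $\dim\HH=n$, the vectors $x_1,\dots,x_{n+1}$ are linearly dependent. So there is a nonzero $\lambda\in\RR^{n+1}$ with $\sum_i\lambda_i x_i=0$. Let $P=\{i:\lambda_i>0\}$ and $N=\{i:\lambda_i<0\}$. Replacing $\lambda$ by $-\lambda$ if necessary, we may assume that $\sum_i\lambda_i\le 0$ and that $P\neq\varnothing$.

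\textbf{Choosing a subset to test.} Take $S=P$ (extended arbitrarily on the indices $>n+1$). Then $\langle x_i,y_S\rangle\ge s$ for $i\in P$, and $\langle x_i,y_S\rangle\le s-\eps$ for $i\in N$. Pairing with the linear relation gives
\[
0=\Big\langle\sum_i\lambda_i x_i,y_S\Big\rangle=\sum_{i\in P}\lambda_i\langle x_i,y_S\rangle+\sum_{i\in N}\lambda_i\langle x_i,y_S\rangle\ \ge\ s\sum_{i\in P}\lambda_i+(s-\eps)\sum_{i\in N}\lambda_i .
\]
The right-hand side equals $s\sum_i\lambda_i+\eps\sum_{i\in N}|\lambda_i|$.

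\textbf{Main obstacle: the sign of $s$.} If $s\le 0$, the first term is nonnegative because $\sum_i\lambda_i\le0$. The contradiction then requires $N\ne\varnothing$ or some strict inequality. If instead $s>0$, we should use the opposite normalization: choose the sign of $\lambda$ so that $\sum_i\lambda_i\ge0$, and test with $S=P$. The two cases combine because we are free to pick the sign of $\lambda$ after seeing $s$. This leaves a degenerate case, where $\sum_i\lambda_i=0$ would force $N\ne\varnothing$ automatically, and a case where $N=\varnothing$ but $\sum_i\lambda_i>0$. In the latter, testing with $S=\varnothing$ gives $0=\sum_i\lambda_i\langle x_i,y_\varnothing\rangle\le(s-\eps)\sum_i\lambda_i$, so $s\ge\eps$. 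Testing with $S=[d]$ gives $0\ge s\sum_i\lambda_i>0$, which is a contradiction.

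So the plan is a case split on the sign of $s$ and on whether $N$ is empty. In every case, some choice of $S\in\{P,\varnothing,[d]\}$ produces a strict inequality $0>0$, and the margin $\eps>0$ is what supplies the strictness. I expect most of the care to go into organizing this case analysis cleanly rather than into any single hard estimate.
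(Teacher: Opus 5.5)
Your argument is correct, but it takes a different route from the paper. The paper projects the $y_S$ onto $V=\operatorname{span}\{x_i\}$ and considers the $d$ affine hyperplanes $H_i=\{y\in V:\langle x_i,y\rangle=s-\eps/2\}$. It then invokes Zaslavsky's bound: an arrangement of $d$ hyperplanes in dimension $r\le n$ has at most $\sum_{j\le r}\binom dj<2^d$ cells, so not all $2^d$ patterns can be realized. You instead take a linear dependence $\sum_i\lambda_i x_i=0$ among $x_1,\dots,x_{n+1}$ and test it against a single well-chosen $y_S$, which is a Radon-type argument. Your route is elementary and self-contained, needs no projection, and exhibits an explicit non-realizable pattern. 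The paper's route yields the stronger Sauer--Shelah-type conclusion that at most $\sum_{j\le n}\binom dj$ patterns are realizable, at the cost of citing an external theorem.

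Some remarks on your write-up. The opening normalization ``$\sum_i\lambda_i\le0$ and $P\ne\varnothing$'' cannot always be achieved at once; for example $\lambda=(-1,-1,0,\dots)$ fails it. This is harmless, because you never use $P\ne\varnothing$.

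The case analysis also collapses to one line, and the tests with $S=\varnothing$ and $S=[d]$ are unnecessary. If $s\sum_i\lambda_i\ne0$, choose the sign of $\lambda$ so that $s\sum_i\lambda_i>0$. Otherwise choose it so that $N\ne\varnothing$, which is possible since $\lambda\ne0$. Then the single test $S=P$ gives $0\ge s\sum_i\lambda_i+\eps\sum_{i\in N}|\lambda_i|>0$.

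Finally, the drop from $n+1$ to $n$ does not come from the margin or from the $\sigma,-\sigma$ symmetry of Proposition~\ref{prop:vc inner product}, which you never use. It comes from the threshold $s$ being the same for every $S$, so that $s$ factors out as $s\sum_i\lambda_i$. The margin only supplies strictness, which is needed because both defining inequalities are non-strict.
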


\begin{proof}
Suppose toward contradiction that $f$ contains a $(d,\eps)$-VC graph with $d>n$.
Thus there exist points
\[
x_1,\dots,x_d\in \U,
\qquad
\{y_S: S\subseteq[d]\}\subseteq \U,
\]
and a threshold $s\in\RR$ witness the graph. 

Let
$
V=\operatorname{span}\{x_1,\dots,x_d\}\subseteq \HH.
$
Then $\dim V\le n$. For each $y\in \HH$, only its orthogonal projection to $V$ matters for the values
$\langle x_i,y\rangle$, so we may replace every $y_S$ by its projection $P_V y_S$  and work entirely inside $V$.

For each $i\in[d]$, consider the affine hyperplane
\[
H_i=\{y\in V:\langle x_i,y\rangle = s -\eps/2\}.
\]
Hence the sign pattern
$
({\mathbbm 1}_{\{\langle x_1,y\rangle>s-\eps/2\}},\dots,{\mathbbm 1}_{\{\langle x_d,y\rangle>s-\eps/2\}})
$
is constant on each cell of the hyperplane arrangement $\{H_1,\dots,H_d\}$ in $V$.
Since the family $\{y_S: S\subseteq[d]\}$ realizes all $2^d$ subsets of $[d]$, this arrangement must have at least
$2^d$ cells.

On the other hand, Zaslavsky's theorem~\cite{Zaslavsky} claims that the number of cells cut out by $d$ affine hyperplanes
in an $r$-dimensional real vector space is at most
$
\sum_{j=0}^{r} \binom{d}{j},
$
where here $r=\dim V\le n$.
Since $d>r$, we have
$
\sum_{j=0}^{r}\binom{d}{j}<2^d.
$
Thus the arrangement cannot realize all $2^d$ subsets, a contradiction.
Therefore $d\le n$, and hence $\VC_\eps(f)\le n$.
\end{proof}

Let us remark that if $\HH$ is a complex Hilbert space, by replacing $\langle x,y\rangle$ by $\Re\langle x,y\rangle$, the same proof in Proposition~\ref{prop:vc inner product} still works, but we would only obtain $2\dim(\HH)$ from Lemma~\ref{lem: bounded dim}, so the statement for complex Hilbert space in Proposition~\ref{prop:vc inner product} should be change to $\min\{2\dim(\HH),   4/\eps^2 \}$.

\section{A matching lower bound construction}

For an integer $n\geq 2$, let $J_n=(J_{ij})_{i,j=1}^n$ be the real skew symmetric matrix with $J_{ij}=1$ when $i<j$ and with all diagonals equal to $0$. Define the trace norm (also known as Schatten $1$-norm) of a matrix the sum of its singular value, then we have the following observation. \
\begin{lemma}
    We have $\|J_n\|_{S_1}=\frac{1}{n}\sum_{r=1}^n
        |\cot(\frac{(2r-1)\pi}{2n})|$. 
\end{lemma}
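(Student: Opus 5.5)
The plan is to diagonalize $J_n$ explicitly. Since $J_n$ is real skew-symmetric, it is normal, so its singular values are the absolute values of its eigenvalues, and $\|J_n\|_{S_1}=\sum_\lambda|\lambda|$. It therefore suffices to find the spectrum of $J_n$. (Note that with $J_{ij}=1$ for $i<j$, skew-symmetry forces $J_{ij}=-1$ for $i>j$.)

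The first step is to relate $J_n$ to a circulant-like operator. Let $P$ be the cyclic shift twisted by $-1$: $Pe_j=e_{j+1}$ for $j<n$ and $Pe_n=-e_1$. Then $P^n=-I$, and $P$ has eigenvalues $\omega_r=e^{i(2r-1)\pi/n}$ for $r=1,\dots,n$. I claim that $J_n=\sum_{k=1}^{n-1}(P^{k})^{T}$, up to sign and transpose conventions. Indeed, the $(i,j)$ entry of $P^k$ equals $1$ if $i=j+k\le n$ and $-1$ if $i=j+k-n$. Summing over $k$ gives, in position $(i,j)$, the value $1$ when $i>j$ and $-1$ when $i<j$. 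This is exactly $-J_n$, so $J_n=-\sum_{k=1}^{n-1}P^k$. I will check this sign bookkeeping carefully, but the absolute values are unaffected by it.

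Hence $J_n$ is diagonalized by the eigenvectors of $P$, with eigenvalues
\[
\lambda_r=-\sum_{k=1}^{n-1}\omega_r^k=-\frac{\omega_r-\omega_r^n}{1-\omega_r}=-\frac{\omega_r+1}{1-\omega_r},
\]
using $\omega_r^n=-1$. Writing $\omega_r=e^{i\phi}$ with $\phi=(2r-1)\pi/n$, we have
\[
\frac{1+e^{i\phi}}{1-e^{i\phi}}=\frac{\cos(\phi/2)}{-i\sin(\phi/2)}=i\cot(\phi/2).
\]
So $|\lambda_r|=|\cot((2r-1)\pi/(2n))|$, and the eigenvalues are purely imaginary, as expected for a skew-symmetric matrix.

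Summing over $r$ gives $\|J_n\|_{S_1}=\sum_{r=1}^n|\cot((2r-1)\pi/(2n))|$. The stated formula has an extra factor $\frac1n$. That factor should come from a normalization convention I would have to match, presumably that $J_n$ is scaled by $1/n$, or that the intended quantity is a normalized trace norm. The main obstacle is therefore not the computation but reconciling this normalization. I would verify it on $n=2$: here $J_2=\begin{pmatrix}0&1\\-1&0\end{pmatrix}$ has trace norm $2$, while $\sum_r|\cot|=|\cot(\pi/4)|+|\cot(3\pi/4)|=2$. This suggests the unnormalized identity is the correct one, and that the $\frac1n$ belongs to the intended normalization of the matrix (e.g.\ $\frac1n J_n$). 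I would state the result in that form.
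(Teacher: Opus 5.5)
Your computation is correct and is essentially the paper's argument. The paper directly checks that $v_r=n^{-1/2}(1,\zeta_r,\dots,\zeta_r^{n-1})^{\mathsf T}$, with $\zeta_r^n=-1$, satisfies $J_nv_r=\frac{1+\zeta_r}{1-\zeta_r}v_r=\mathrm{i}\cot\bigl(\frac{(2r-1)\pi}{2n}\bigr)v_r$ by the same geometric sum; your identity $J_n=-\sum_{k=1}^{n-1}P^k$ for the negacyclic shift just explains why these vectors are eigenvectors. You are also right about the normalization: the paper's proof yields exactly $\|J_n\|_{S_1}=\sum_{r=1}^n|\cot(\frac{(2r-1)\pi}{2n})|$, your $n=2$ check shows the stated version with $\frac1n$ is false, and the $\frac1n$ belongs to $c_n=\frac1n\|J_n\|_{S_1}$, which the paper defines immediately afterwards.
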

\begin{proof}
 We compute the singular values of $J_n$. Write
  \[
    \zeta_r:=\exp\!\left(\frac{(2r-1)\pi\mathrm{i}}{n}\right),
    \qquad
    v_r:=\frac{1}{n^{1/2}}(1,\zeta_r,\ldots,\zeta_r^{n-1})^{\mathsf T},
  \]
  one has $\zeta_r^n=-1$, and a geometric sum calculation gives
  \[
    J_nv_r
      =\frac{1+\zeta_r}{1-\zeta_r}v_r
      =\mathrm{i}\cot\!\left(\frac{(2r-1)\pi}{2n}\right)v_r.
  \]
  It follows that the singular values of
  $J_n$ are
  \[
    \left|\cot\!\left(\frac{(2r-1)\pi}{2n}\right)\right|
  \]
  for $1\leq r\leq n$. 
\end{proof}

Now we write
\[
  c_n=\frac{1}{n}\lVert J_n\rVert_{S_1}
      =\frac{1}{n}\sum_{r=1}^n
        \left|\cot\!\left(\frac{(2r-1)\pi}{2n}\right)\right|.
\]
Let us first estimate the size of $c_n$ by the following computational lemma. 

\begin{lemma}\label{lem: estimate c_n}
  For every integer $n\geq 2$, we have
  \[
    c_n\leq \frac{2}{\pi}(\log n+1).
  \]
\end{lemma}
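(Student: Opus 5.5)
We need to bound $c_n=\frac1n\sum_{r=1}^n|\cot(\theta_r)|$ with $\theta_r=\frac{(2r-1)\pi}{2n}\in(0,\pi)$. The plan is to exploit symmetry and then compare the sum with an integral.

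\textbf{Symmetry.} Since $\theta_{n+1-r}=\pi-\theta_r$ and $|\cot(\pi-\theta)|=|\cot\theta|$, the terms pair up. Each $\theta_r\le\pi/2$ has its reflected partner, and for odd $n$ the middle angle is exactly $\pi/2$, where $\cot=0$. Hence
\[
c_n\le \frac{2}{n}\sum_{1\le r\le n/2}\cot\Big(\frac{(2r-1)\pi}{2n}\Big),
\]
and every term in this sum is nonnegative.

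\textbf{The $r=1$ term.} I will use $\cot x\le 1/x$ on $(0,\pi/2]$. This gives $\frac{2}{n}\cot(\pi/(2n))\le \frac{2}{n}\cdot\frac{2n}{\pi}=\frac4\pi$.

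\textbf{The terms $r\ge 2$.} Since $\cot$ is decreasing on $(0,\pi/2]$, each term is at most its average over the preceding interval of length $\pi/n$:
\[
\cot\theta_r\le \frac{n}{\pi}\int_{\theta_r-\pi/n}^{\theta_r}\cot x\,dx .
\]
Summing over $2\le r\le n/2$ gives at most $\frac n\pi\int_{\pi/(2n)}^{\pi/2}\cot x\,dx=\frac n\pi\log\frac{1}{\sin(\pi/(2n))}$. Multiplying by $2/n$, these terms contribute at most $\frac2\pi\log\frac{1}{\sin(\pi/(2n))}$.

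\textbf{Combining.} Using $\sin x\ge 2x/\pi$ on $[0,\pi/2]$, we get $\sin(\pi/(2n))\ge 1/n$, so the tail is at most $\frac2\pi\log n$. Therefore
\[
c_n\le \frac2\pi\log n+\frac4\pi=\frac2\pi(\log n+2).
\]

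\textbf{Main obstacle: the additive constant.} The bound above has $+2$, while the statement asks for $+1$, so the first term needs sharper treatment. I would bound $\frac2n\cot(\pi/(2n))$ by $\frac4\pi$ minus a small correction, using $\cot x\le 1/x - x/3$. I would also keep the exact integral $\log(1/\sin(\pi/(2n)))$ and use $\sin x\ge x-x^3/6$ to write $\log\frac{1}{\sin(\pi/(2n))}=\log\frac{2n}{\pi}+O(n^{-2})$. The tail then becomes $\frac2\pi(\log n+\log(2/\pi))+O(n^{-2})$, and since $\log(2/\pi)\approx-0.45$ this saves about $0.45$ in the bracket.

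That saving alone is not enough. To close the remaining gap I would try one of two refinements. The first is a midpoint/convexity comparison: because $\cot$ is convex on $(0,\pi/2)$, $\cot\theta_r\le \frac n\pi\int_{\theta_r-\pi/(2n)}^{\theta_r+\pi/(2n)}\cot x\,dx$ holds for the terms $r\ge2$. The second is to treat $r=1$ and $r=2$ explicitly and apply the integral comparison only for $r\ge3$, giving $c_n\le\frac2\pi(\log n+1)$ for $n\ge n_0$. For the finitely many $n$ below $n_0$, in particular $n=2$ where $c_2=1\le\frac2\pi(\log2+1)\approx1.08$, I would verify the inequality by direct computation. Getting this constant down to $1$, including a careful check of the small cases, is the step I expect to be hardest.
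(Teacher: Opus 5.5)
\textbf{The gap.} What you actually prove is $c_n\le\frac{2}{\pi}(\log n+2)$. The step that matters, bringing the constant down to $1$, is left as ``one of two refinements'' and never verified, so as written the proposal does not prove the lemma.

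Moreover, your second refinement fails if it uses the monotone comparison from your main argument. Treating $r=1,2$ explicitly gives
\[
c_n\le\tfrac{2}{n}(\cot\theta_1+\cot\theta_2)+\tfrac{2}{\pi}\log\tfrac{1}{\sin\theta_2}.
\]
Since $\frac{2}{n}\cot\theta_r\to\frac{4}{(2r-1)\pi}$ for fixed $r$, computing these terms exactly buys nothing asymptotically. The bound is therefore $\frac{2}{\pi}\bigl(\log n+\frac83+\log\frac{2}{3\pi}+o(1)\bigr)\approx\frac{2}{\pi}(\log n+1.12)$. This exceeds the target for every large $n$, so your claim that it works for $n\ge n_0$ is false. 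Likewise, applying $\cot x\le 1/x-x/3$ only to $r=1$ gains just $O(n^{-2})$.

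\textbf{The fix.} Your first refinement closes the gap on its own, with no $n_0$. The function $\cot$ is convex on $(0,\pi/2]$. For $2\le r\le\lfloor n/2\rfloor$, $\theta_r$ is the midpoint of $[(r-1)\pi/n,\,r\pi/n]\subseteq[\pi/n,\pi/2]$. Hermite--Hadamard therefore gives $\frac{2}{n}\sum_{r\ge 2}\cot\theta_r\le\frac{2}{\pi}\log\frac{1}{\sin(\pi/n)}$. Combined with $\frac{2}{n}\cot\frac{\pi}{2n}\le\frac{4}{\pi}$, this yields
\[
c_n\le\frac{2}{\pi}\Bigl(\log n+2-\log\bigl(n\sin\tfrac{\pi}{n}\bigr)\Bigr).
\]
The quantity $n\sin(\pi/n)$ is increasing in $n$ and $4\sin(\pi/4)=2\sqrt2>e$, so the bound holds for all $n\ge4$. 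For $n=2,3$ there are no tail terms, and $c_2=1$ and $c_3=2/\sqrt3$ are checked directly.

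\textbf{Comparison with the paper.} The paper applies $\cot x\le 1/x-x/3$ to \emph{every} term, obtaining $\frac{\pi}{2}c_n\le S_h-\frac{\pi^2h^2}{6n^2}$ with $S_h=\sum_{j\le h}(j-\frac12)^{-1}$. It then uses the harmonic-sum bound $S_h\le\log(4h)+\gamma+\frac{1}{2h(2h+1)}$. Summed over all $r$, the $-x/3$ terms contribute about $-\pi^2/24$, which pushes the constant to roughly $\log2+\gamma-\pi^2/24\approx0.86$. The convexity route replaces Euler's constant and the harmonic asymptotics with the exact antiderivative $\log\sin x$. It reaches essentially the same constant, $2-\log\pi\approx0.86$, with a simpler small-case check.
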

\begin{proof}
Write $h= n/2$ and define $S_h=\sum_{j=1}^h(j-1/2)^{-1}$. Observe that for $0<x<\pi/2$, $\cot x \leq 1/x - x/3$. 
Applying this term by term to the exact formula for $c_n$ yields
  \begin{equation}\label{eq: c_n}
    \frac{\pi}{2}c_n
      \leq S_h -\frac{\pi^2h^2}{6n^2}.                        
  \end{equation}
  The standard harmonic sum estimates yield
  \[
    S_h\leq\log(4h)+\gamma+\frac{1}{2h(2h+1)}.
  \]
  If $n=2h$, subtracting $\log n$ from the right hand side of \eqref{eq: c_n} is strictly less than $1$. Indeed, for $h\geq2$, it is at most $\log2+\gamma+1/20-\pi^2/24<1$, and $h=1$ is immediate. If $n=2h+1$, the corresponding quantity is at most
  \[
    \log\frac{4h}{2h+1}+\gamma+\frac{1}{2h(2h+1)}
      -\frac{\pi^2h^2}{6(2h+1)^2}<1.
  \]
  Indeed, $h=1,2$ are immediate, while for $h\geq3$ the last display is at most $\log2+\gamma+1/42-3\pi^2/98<1$. This proves the claim.
\end{proof}

We are now able to prove our main theorem of the section, which answers Question~\ref{question}.

\begin{theorem}
      Let $0<\varepsilon<1$ and write
      \[
    N=\exp\!\left(\frac{\pi}{\varepsilon}-1\right)
  \]
  The inner product on the unit ball of any Hilbert space $\HH$ of dimension at least $N$ is not $(k,\varepsilon)$-stable for every $k\leq N$.
\end{theorem}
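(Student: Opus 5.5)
We construct the half-graph directly from the singular value decomposition of the matrix $J_n$ just analyzed, with $n=k$. Write $J_n=U\Sigma V^{*}$ with $\Sigma=\mathrm{diag}(\sigma_1,\dots,\sigma_n)$, $\sigma_r=|\cot(\frac{(2r-1)\pi}{2n})|$. Then $J_{ij}=\sum_r u_{ir}\sigma_r\overline{v_{jr}}$.

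\emph{Step 1 (defining the vectors).} In $\mathbb{C}^n$, or in $\RR^{2n}$ after realification (and then embedded in $\HH$, which has dimension at least $N\ge n$, taking a real form if needed), set
\[
x_i:=\frac{1}{\sqrt{c_n}}\Big(\frac{\sqrt{\sigma_r}\,u_{ir}}{\sqrt n}\Big)_{r=1}^n,\qquad y_j:=\frac{1}{\sqrt{c_n}}\Big(\frac{\sqrt{\sigma_r}\,v_{jr}}{\sqrt n}\Big)_{r=1}^n .
\]
This gives $\langle x_i,y_j\rangle=J_{ij}/(n c_n)$, or its real part, which is the same since $J$ is real.

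The norms are $\|x_i\|^2=\frac{1}{nc_n}\sum_r\sigma_r|u_{ir}|^2$. These are bounded by $1$ only if the rows of $U$ are equidistributed, i.e.\ $|u_{ir}|^2=1/n$. This is exactly where the explicit eigenvectors come in. Since $J_n$ is skew-symmetric it is normal, so it is diagonalized by the Fourier-type unitary with columns $v_r$, whose entries all have modulus $n^{-1/2}$. Writing $J_nv_r=\mathrm{i}\lambda_rv_r$, we take $u_r=\mathrm{i}\,\mathrm{sgn}(\lambda_r)v_r$ and $\sigma_r=|\lambda_r|$. Both $U$ and $V$ are then flat, and $\|x_i\|^2=\|y_j\|^2=\frac{1}{n c_n}\sum_r\sigma_r\cdot\frac1n\cdot n$, which equals $1$ once the normalization is chosen correctly. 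I will recheck the constants so that $\|x_i\|^2=\frac{\|J_n\|_{S_1}}{n\,c_n}=1$.

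\emph{Step 2 (the half-graph gap).} With the normalization fixed, $\langle x_i,y_j\rangle-\langle x_j,y_i\rangle=(J_{ij}-J_{ji})/(nc_n)\cdot(\text{const})$. With the correct scaling $\langle x_i,y_j\rangle=J_{ij}/c_n$, and the gap is $2/c_n$ or $1/c_n$ depending on the convention. The sign convention should be arranged so that $i<j$ gives a positive gap. By Lemma~\ref{lem: estimate c_n}, $1/c_n\ge \frac{\pi}{2(\log n+1)}$. To reach $\eps$ we need $\log n+1\le \pi/(2\eps)$ if the gap is $1/c_n$, or $\log n+1\le\pi/\eps$ if it is $2/c_n$.

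The target $N=\exp(\pi/\eps-1)$ matches the version with gap $2/c_n$. So I expect the normalization to give $\langle x_i,y_j\rangle=J_{ij}/c_n$ with $J$ skew, $J_{ij}=1$ and $J_{ji}=-1$, hence a gap of $2/c_n\ge \pi/(\log n+1)\ge\eps$ for $n\le N$.

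\emph{Step 3 (all $k\le N$).} For any $k\le N$, take $n=k$, or take a sub-configuration of the $n=\lfloor N\rfloor$ example, since sub-half-graphs remain half-graphs.

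The main obstacle is getting the normalization exactly right. It requires checking that the unitary factors are flat (the modulus $n^{-1/2}$ entries), so that the trace norm divided by $n$ is precisely the norm budget, and verifying that the factor $2$ from skew-symmetry gives the constant $\pi$ rather than $\pi/2$. The realification needed when $\HH$ is real must also be handled: use $\Re\langle\cdot,\cdot\rangle$ on $\mathbb{C}^n\cong\RR^{2n}$, which preserves the values since $J$ is real. This doubles the dimension, so the dimension requirement may need care. Alternatively, pair conjugate eigenvectors $v_r,\overline{v_r}=v_{n+1-r}$ to get a real decomposition in dimension $n$.
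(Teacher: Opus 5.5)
This is essentially the paper's proof. You take an SVD $J_n=U\Sigma V^{*}$ and use the rows of $c_n^{-1/2}U\Sigma^{1/2}$ and $c_n^{-1/2}V\Sigma^{1/2}$ as $x_i,y_j$; they have unit norm because $|J_n|$ has constant diagonal $c_n$ (your flat eigenvectors from the preceding lemma are exactly the justification the paper leaves as ``clearly''), the gap is $(J_{ij}-J_{ji})/c_n=2/c_n$, and Lemma~\ref{lem: estimate c_n} finishes.

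Two fixes to your write-up:
\begin{itemize}
\item Delete the stray factor $n^{-1/2}$ in your definition of $x_i,y_j$. Then $\|x_i\|^2=c_n^{-1}\sum_r\sigma_r|u_{ir}|^2=1$ and $\langle x_i,y_j\rangle=J_{ij}/c_n$ exactly, so the gap is $2/c_n$ directly and you need not hedge between $1/c_n$ and $2/c_n$ or reason backward from the target.
\item The paper avoids realification and dimension doubling by taking a real SVD $J_n=A\Sigma B^{\mathsf T}$. Since $A\Sigma A^{\mathsf T}=(J_nJ_n^{\mathsf T})^{1/2}$ does not depend on which SVD is chosen, it still has diagonal $c_n$, and the vectors live in $\mathbb R^n$.
\end{itemize}
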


\begin{proof}
Write $\Sigma = \mathrm{diag}(\sigma_1,\dots,\sigma_n)$ where $\sigma_i$ are singular values of $J_n$. 
Consider a real singular value decomposition
\[
    J_n=A\Sigma B^{\mathsf T}
\]
and define $X=c_n^{-1/2}A\Sigma^{1/2}$ and $Y=c_n^{-1/2}B\Sigma^{1/2}$. Let $x_i$ and $y_i$ be the $i$-th rows of $X$ and $Y$, regarded as vectors in $\mathbb R^n\subseteq \HH$. Clearly we have
\[
    \lVert x_i\rVert^2
      =\frac{(A\Sigma A^{\mathsf T})_{ii}}{c_n}=1,
    \qquad
    \lVert y_i\rVert^2
      =\frac{(B\Sigma B^{\mathsf T})_{ii}}{c_n}=1.
\]
On the other hand, 
\[
    \langle x_i,y_j\rangle=(XY^{\mathsf T})_{ij}
      =\frac{J_{ij}}{c_n}.
\]
Thus, whenever $i<j$,
\[
    \langle x_i,y_j\rangle-\langle x_j,y_i\rangle
      =\frac{J_{ij}-J_{ji}}{c_n}=\frac{2}{c_n}.
\]
By Lemma~\ref{lem: estimate c_n}, we conclude that
\[
\langle x_i,y_j\rangle-\langle x_j,y_i\rangle\geq \frac{\pi}{\log n +1}
\]
and the theorem follows. 
\end{proof}

Let us remark that the value $c_n$ is the $\gamma_2$-norm of $J_n$, which is also known as the Hilbert space factorization norm and is equivalent to the cut norm defined in \eqref{eq: cut}. 

\bibliographystyle{amsalpha}
\bibliography{ref}

\providecommand{\bysame}{\leavevmode\hbox to3em{\hrulefill}\thinspace}
\providecommand{\MR}{\relax\ifhmode\unskip\space\fi MR }
% \MRhref is called by the amsart/book/proc definition of \MR.
\providecommand{\MRhref}[2]{%
  \href{http://www.ams.org/mathscinet-getitem?mr=#1}{#2}
}
\providecommand{\href}[2]{#2}
\begin{thebibliography}{BYBHU08}

\bibitem[BMMN13]{BMMN}
Mark Braverman, Konstantin Makarychev, Yury Makarychev, and Assaf Naor,
  \emph{The {G}rothendieck constant is strictly smaller than {K}rivine's
  bound}, Forum Math. Pi \textbf{1} (2013), e4, 42. \MR{3141414}

\bibitem[BY14]{BenStab}
Ita\"i Ben~Yaacov, \emph{Model theoretic stability and definability of types,
  after {A}. {G}rothendieck}, Bull. Symb. Log. \textbf{20} (2014), no.~4,
  491--496. \MR{3294276}

\bibitem[BYBHU08]{Continuous_Logic}
Ita\"i{} Ben~Yaacov, Alexander Berenstein, C.~Ward Henson, and Alexander
  Usvyatsov, \emph{Model theory for metric structures}, Model theory with
  applications to algebra and analysis. {V}ol. 2, London Math. Soc. Lecture
  Note Ser., vol. 350, Cambridge Univ. Press, Cambridge, 2008, pp.~315--427.
  \MR{2436146}

\bibitem[CCP24]{CCP24}
Nicolas Chavarria, Gabriel Conant, and Anand Pillay, \emph{Continuous stable
  regularity}, J. Lond. Math. Soc. (2) \textbf{109} (2024), no.~1, Paper No.
  e12822, 36. \MR{4680211}

\bibitem[Con21]{Conant_Stab}
Gabriel Conant, \emph{Stability in a group}, Groups Geom. Dyn. \textbf{15}
  (2021), no.~4, 1297--1330. \MR{4349660}

\bibitem[CT20]{ChernikovTowsner}
Artem Chernikov and Henry Towsner, \emph{Hypergraph regularity and higher arity
  vc-dimension}, arXiv:2010.00726 (2020).

\bibitem[FHMZ05]{FHMZ}
M.~Fabian, P.~H\'ajek, V.~Montesinos, and V.~Zizler, \emph{A quantitative
  version of {K}rein's theorem}, Rev. Mat. Iberoamericana \textbf{21} (2005),
  no.~1, 237--248. \MR{2155020}

\bibitem[Gra06]{Granero}
Antonio~S. Granero, \emph{An extension of the {K}rein-\v smulian theorem}, Rev.
  Mat. Iberoam. \textbf{22} (2006), no.~1, 93--110. \MR{2267314}

\bibitem[Gro52]{Grothendieck}
A.~Grothendieck, \emph{Crit\`eres de compacit\'e{} dans les espaces
  fonctionnels g\'en\'eraux}, Amer. J. Math. \textbf{74} (1952), 168--186.
  \MR{47313}

\bibitem[Gro53]{GrothendieckIneq}
\bysame, \emph{R\'esum\'e{} de la th\'eorie m\'etrique des produits tensoriels
  topologiques}, Bol. Soc. Mat. S\~ao Paulo \textbf{8} (1953), 1--79.
  \MR{94682}

\bibitem[Hru12]{Hrushovski}
Ehud Hrushovski, \emph{Stable group theory and approximate subgroups}, J. Amer.
  Math. Soc. \textbf{25} (2012), no.~1, 189--243. \MR{2833482}

\bibitem[KKS13]{QuantitativeBanach}
Miroslav Ka{\v c}ena, Ond{\v r}ej F.~K. Kalenda, and Ji{\v r}{\'i} Spurn{\'y},
  \emph{Quantitative {D}unford-{P}ettis property}, Adv. Math. \textbf{234}
  (2013), 488--527. \MR{3003935}

\bibitem[KM81]{KrivineMaurey}
J.-L. Krivine and B.~Maurey, \emph{Espaces de {B}anach stables}, Israel J.
  Math. \textbf{39} (1981), no.~4, 273--295. \MR{636897}

\bibitem[Kri77]{Krivine_rounding}
Jean-Louis Krivine, \emph{Sur la constante de {G}rothendieck}, C. R. Acad. Sci.
  Paris S\'er. A-B \textbf{284} (1977), no.~8, A445--A446. \MR{428414}

\bibitem[Pil16]{Pillay_Groth}
Anand Pillay, \emph{Generic stability and {G}rothendieck}, South Amer. J. Log.
  \textbf{2} (2016), no.~2, 437--442. \MR{3671044}

\bibitem[PS13]{Pillay_Starchenko}
Anand Pillay and Sergei Starchenko, \emph{Remarks on {T}ao's algebraic
  regularity lemma}, unpublished (2013).

\bibitem[Tal87]{Talagrand87}
Michel Talagrand, \emph{The {G}livenko-{C}antelli problem}, Ann. Probab.
  \textbf{15} (1987), no.~3, 837--870. \MR{893902}

\bibitem[Tal96]{Talagrand96}
\bysame, \emph{The {G}livenko-{C}antelli problem, ten years later}, J. Theoret.
  Probab. \textbf{9} (1996), no.~2, 371--384. \MR{1385403}

\bibitem[Tao13]{TaoBlog}
Terence Tao, \emph{A spectral theory proof of the algebraic regularity lemma},
  blog post available at
  \url{https://terrytao.wordpress.com/2013/10/29/a-spectral-theory-proof-of-the-algebraic-regularity-lemma/}
  (2013).

\bibitem[Tao15]{Tao}
Terence Tao, \emph{Expanding polynomials over finite fields of large
  characteristic, and a regularity lemma for definable sets}, Contrib. Discrete
  Math. \textbf{10} (2015), no.~1, 22--98. \MR{3386249}

\bibitem[Zas75]{Zaslavsky}
Thomas Zaslavsky, \emph{Facing up to arrangements: face-count formulas for
  partitions of space by hyperplanes}, Mem. Amer. Math. Soc. \textbf{1} (1975),
  vii+102. \MR{357135}

\end{thebibliography}

\end{document}